\theoremstyle{plain}
\newtheorem{thm}{Theorem}[section]
\theoremstyle{plain}
\numberwithin{equation}{section}
\numberwithin{figure}{section} 
\theoremstyle{plain}
\newtheorem{cor}[thm]{Corollary}
\theoremstyle{plain}
\newtheorem{lem}[thm]{Lemma}
\theoremstyle{plain}
\theoremstyle{plain}
\newtheorem{rem}[thm]{Remark}
\theoremstyle{plain}
\theoremstyle{plain}
\theoremstyle{plain}
\theoremstyle{plain}
\begin{document}
\title{On The Spaces of Fibonacci Difference Null and Convergent Sequences}
\date{}
\author{Met\.{I}n Ba\c{s}ar\i r}
\address{Sakarya \"{U}n{\tiny \.{I}}vers{\tiny \.{I}}tes{\tiny \.{I}},
Fen-Edeb{\tiny \.{I}}yat Fak\"{u}ltes{\tiny \.{I}}, Matemat{\tiny \.{I}}k B%
\"{o}l\"{u}m\"{u}, Esentepe Kamp\"{u}s\"{u}, Sakarya-54187, T\"{u}rk{\tiny 
\.{I}}ye}
\email{basarir@sakarya.edu.tr}
\author{Feyz\.{I} Ba\c{s}ar}
\address{Fat{\tiny \.{I}}h \"{U}n{\tiny \.{I}}vers{\tiny \.{I}}tes{\tiny 
\.{I}}, Fen-Edeb{\tiny \.{I}}yat Fak\"{u}ltes{\tiny \.{I}}, Matemat{\tiny 
\.{I}}k B\"{o}l\"{u}m\"{u}, B\"{u}y\"{u}k\c{c}ekmece Kamp\"{u}s\"{u}, \.{I}%
stanbul-34500, T\"{u}rk{\tiny \.{I}}ye}
\email{f.basar@fatih.edu.tr, feyzibasar@gmail.com}
\author{Emrah Evren Kara}
\address{D\"{u}zce \"{U}n{\tiny \.{I}}vers{\tiny \.{I}}tes{\tiny \.{I}},
Fen-Edeb{\tiny \.{I}}yat Fak\"{u}ltes{\tiny \.{I}}, Matemat{\tiny \.{I}}k B%
\"{o}l\"{u}m\"{u}, Konuralp Yerle\c{s}kesi, D\"{u}zce-81620, T\"{u}rk{\tiny 
\.{I}}ye}
\email{eevrenkara@hotmail.com}
\keywords{Fibonacci numbers, sequence spaces, difference matrix, alpha-,
beta-, gamma-duals, matrix transformations.}
\subjclass[2000]{ 11B39, 46A45, 46B45.}

\begin{abstract}
In the present paper, by using the band matrix $\widehat{F}$ defined by the
Fibonacci sequence, we introduce the sequence sequence spaces $c_{0}(%
\widehat{F})$ and $c(\widehat {F})$. Also, we give some inclusion relations
and construct the bases of the spaces $c_{0}(\widehat{F})$ and $c(\widehat{F}%
)$. Finally, we compute the alpha-, beta-, gamma-duals of these spaces and
characterize the classes $(c_{0}(\widehat{F}),X)$ and $(c(\widehat{F}),X)$
for certain choice of the sequence space $X$.
\end{abstract}

\maketitle

\section{Introduction}

By $\mathbb{N}$ and $\mathbb{R}$, we denote the sets of all natural and real
numbers, respectively. Let $\omega$ be the vector space of all real
sequences. Any vector subspace of $\omega$ is called a \textit{sequence space%
}. Let $\ell_{\infty}$, $c$, $c_{0}$ and $\ell_{p}$ denote the classes of
all bounded, convergent, null and absolutely $p$-summable sequences,
respectively; where $1\leq p<\infty$. Moreover, we write $bs$ and $cs$ for
the spaces of all bounded and convergent series, respectively. Also, we use
the conventions that $e=(1,1,1,\ldots )$ and $e^{(n)}$ is the sequence whose
only non-zero term is $1$ in the $n^{\text{th}}$ place for each $n\in\mathbb{%
N}$.

Let $\lambda$ and $\mu$ be two sequence spaces and $A=(a_{nk})$ be an
infinite matrix of real numbers $a_{nk}$, where $n,k\in\mathbb{N}$. Then, we
say that $A$ defines a \textit{matrix transformation} from $\lambda$ into $%
\mu$ and we denote it by writing $A:\lambda \rightarrow \mu,$ if for every
sequence $x=(x_{k})\in \lambda$ the sequence $Ax=\{A_{n}(x)\}$, the $A$%
-transform of $x$, is in $\mu$; where 
\begin{eqnarray}  \label{eq11}
A_{n}(x)=\sum_{k}a_{nk}x_{k}~\text{ for each }~n\in \mathbb{N}.
\end{eqnarray}
For simplicity in notation, here and in what follows, the summation without
limits runs from $0$ to $\infty$. By $(\lambda,\mu)$, we denote the class of
all matrices $A$ such that $A:\lambda \rightarrow \mu$. Thus, $%
A\in(\lambda,\mu)$ if and only if the series on the right side of (\ref{eq11}%
) converges for each $n\in\mathbb{N}$ and every $x\in\lambda$, and we have $%
Ax\in \mu$ for all $x\in \lambda$. Also, we write $A_{n}=(a_{nk})_{k\in%
\mathbb{N}}$ for the sequence in the $n$-th row of $A$.

The \textit{matrix domain} $\lambda_{A}$ of an infinite matrix $A$ in a
sequence space $\lambda$ is defined by 
\begin{eqnarray}  \label{eq12}
\lambda_{A}=\left\{x=(x_{k})\in\omega:Ax\in \lambda\right\}
\end{eqnarray}
which is a sequence space.

By using the matrix domain of a triangle infinite matrix, so many sequence
spaces have recently been defined by several authors, see for instance \cite%
{1,2,3,4,5,6,7}. In the literature, the matrix domain $\lambda_{\Delta}$ is
called the \textit{difference sequence space} whenever $\lambda$ is a normed
or paranormed sequence space, where $\Delta$ denotes the backward difference
matrix $\Delta=(\Delta_{nk})$ and $\Delta^{\prime }=(\Delta^{\prime }_{nk})$
denotes the transpose of the matrix $\Delta$, the forward difference matrix,
which are defined by 
\begin{eqnarray*}
\Delta_{nk}&=&\left\{%
\begin{array}{ccl}
(-1)^{n-k} & , & n-1\leq k\leq n, \\ 
0 & , & 0\leq k<n-1\text{ or\ }k>n,%
\end{array}%
\right. \\
\Delta^{\prime }_{nk}&=&\left\{%
\begin{array}{ccl}
(-1)^{n-k} & , & n\leq k\leq n+1, \\ 
0 & , & 0\leq k<n\text{ or\ }k>n+1%
\end{array}
\right.
\end{eqnarray*}
for all $k,n\in\mathbb{N}$; respectively. The notion of difference sequence
spaces was introduced by K\i zmaz \cite{8}, who defined the sequence spaces 
\begin{eqnarray*}
X(\Delta)=\left\{x=(x_{k})\in \omega:(x_{k}-x_{k+1})\in X\right\}
\end{eqnarray*}
for $X=\ell_{\infty}$, $c$ and $c_{0}$. The difference space $bv_{p}$,
consisting of all sequences $(x_{k})$ such that $(x_{k}-x_{k-1})$ is in the
sequence space $\ell_{p}$, was studied in the case $0<p<1$ by Altay and Ba%
\c{s}ar \cite{9} and in the case $1\leq p\leq\infty$ by Ba\c{s}ar and Altay 
\cite{10}, and \c{C}olak et al. \cite{11}. Kiri\c{s}\c{c}i and Ba\c{s}ar 
\cite{4} have been introduced and studied the generalized difference
sequence spaces 
\begin{eqnarray*}
\widehat{X}=\left \{x=(x_{k})\in \omega:B(r,s)x\in X\right\},
\end{eqnarray*}
where $X$ denotes any of the spaces $\ell_{\infty}$, $\ell_{p}$, $c$ and $%
c_{0}$, $1\leq p<\infty$, and $B(r,s)x=(sx_{k-1}+rx_{k})$ with $r,s\in%
\mathbb{R}\setminus\{0\}$. Following Kiri\c{s}\c{c}i and Ba\c{s}ar [4], S%
\"{o}nmez \cite{17} have been examined the sequence space $X(B)$ as the set
of all sequences whose $B(r,s,t)$-transforms are in the space $X\in\{
\ell_{\infty},\ell_{p},c,c_{0}\}$, where $B(r,s,t)$ denotes the triple band
matrix $B(r,s,t)=\{b_{nk}(r,s,t)\}$ defined by 
\begin{eqnarray*}
b_{nk}(r,s,t)=\left\{%
\begin{array}{ccl}
r & , & n=k \\ 
s & , & n=k+1 \\ 
t & , & n=k+2 \\ 
0 & , & \text{otherwise}%
\end{array}
\right.
\end{eqnarray*}
for all $k,n\in\mathbb{N}$ and $r,s,t\in\mathbb{R}\setminus\{0\}$. Also in 
\cite{18,19,20,21,22,23,25,26,27}, authors studied certain difference
sequence spaces. Furthermore, quite recently, Kara \cite{28} has defined the
Fibonacci difference matrix $\widehat{F}$ by means of the Fibonacci sequence 
$(f_{n})_{n\in\mathbb{N}}$ and introduced the new sequence spaces $\ell_{p}(%
\widehat{F})$ and $\ell_{\infty}(\widehat{F})$ which are derived by the
matrix domain of $\widehat{F}$ in the sequence spaces $\ell_{p}$ and $%
\ell_{\infty}$, respectively; where $1\leq p<\infty$.

In this paper, we introduce the sequence spaces $c_{0}(\widehat{F})$ and $c(%
\widehat{F})$ by using the Fibonacci difference matrix $\widehat{F}$. The
rest of this paper is organized, as follows:

In Section 2, we give some notations and basic concepts. In Section 3, we
introduce sequence spaces $c_{0}(\widehat{F})$ and $c(\widehat{F})$, and
establish some inclusion relations. Also, we construct the bases of these
spaces. In Section 4, the alpha-, beta-, gamma-duals of the spaces $c_{0}(%
\widehat{F})$ and $c(\widehat{F})$ are determined and the classes $(c_{0}(%
\widehat{F}),X)$ and $(c(\widehat{F}),X)$ of matrix transformations are
characterized, where $X$ denotes any of the spaces $\ell_{\infty}$, $f$, $c$%
, $f_{0}$, $c_{0}$, $bs$, $fs$ and $\ell_{1}$.

\section{Preliminaries}

A \textit{$B$-space} is a complete normed space. A topological sequence
space in which all coordinate functionals $\pi_{k}$, $\pi_{k}(x)=x_{k}$, are
continuous is called a \textit{$K$-space}. A \textit{$BK$-space} is defined
as a $K$-space which is also a $B$-space, that is, a $BK$-space is a Banach
space with continuous coordinates. For example, the space $\ell_{p}$ is $BK$%
-space with $\Vert x\Vert_{p}=\left(\sum_{k}\left\vert
x_{k}\right\vert^{p}\right)^{1/p}$ and $c_{0}$, $c$ and $\ell_{\infty}$ are $%
BK$-spaces with $\Vert x\Vert_{\infty}=\sup_{k\in\mathbb{N}}\vert x_{k}\vert$%
, where $1\leq p<\infty$. The sequence space $\lambda$ is said to be \textit{%
solid} (cf. \cite[p. 48]{kg}) if and only if 
\begin{eqnarray*}
\widetilde{\lambda}:=\{(u_{k})\in \omega :\exists(x_{k})\in\lambda \text{
such that } |u_{k}|\leq|x_{k}|~\text{ for all }~k\in\mathbb{N}%
\}\subset\lambda.
\end{eqnarray*}

A sequence $(b_{n})$ in a normed space $X$ is called a \textit{Schauder basis%
} for $X$ if for every $x\in X$ there is a unique sequence $(\alpha_{n})$ of
scalars such that $x=\sum_{n}\alpha _{n}b_{n}$, i.e., $\lim_{m}\left \Vert
x-\sum_{n=0}^{m}\alpha_{n} b_{n}\right \Vert =0.$

The alpha-, beta- and gamma-duals $\lambda^{\alpha}$, $\lambda^{\beta}$ and $%
\lambda^{\gamma}$ of a sequence space $\lambda$ are respectively defined by 
\begin{eqnarray*}
\lambda^{\alpha}&=&\left \{a=(a_{k})\in\omega:ax=(a_{k}x_{k})\in \ell_{1}%
\text{ for all }x=(x_{k})\in\lambda\right\}, \\
\lambda^{\beta}&=&\left\{a=(a_{k})\in \omega:ax=(a_{k}x_{k})\in cs\text{ for
all }x=(x_{k})\in\lambda\right\}, \\
\lambda^{\gamma}&=&\left\{a=(a_{k})\in \omega:ax=(a_{k}x_{k})\in bs\text{
for all }x=(x_{k})\in\lambda\right\}.
\end{eqnarray*}

The sequence $(f_{n})$ of Fibonacci numbers defined by the linear recurrence
equalities 
\begin{eqnarray*}
f_{0}=f_{1}=1~\text{ and }~f_{n}=f_{n-1}+f_{n-2}~\text{ with }~n\geq2.
\end{eqnarray*}
Fibonacci numbers have many interesting properties and applications in arts,
sciences and architecture. For example, the ratio sequences of Fibonacci
numbers converges to the golden ratio which is important in sciences and
arts. Also, some basic properties of sequences of Fibonacci numbers \cite{29}
are given as follows: 
\begin{eqnarray*}
&&\lim_{n\to\infty}\frac{f_{n+1}}{f_{n}}=\frac{1+\sqrt{5}}{2}=\varphi \text{
\ (Golden Ratio)}, \\
&&\sum_{k=0}^{n}f_{k}=f_{n+2}-1~\text{ for each }~n\in\mathbb{N}, \\
&&\sum_{k}\frac{1}{f_{k}}~\text{ converges}, \\
&&f_{n-1}f_{n+1}-f_{n}^{2}=(-1)^{n+1}~\text{ for all }n\geq1\text{ (Cassini
Formula)}.
\end{eqnarray*}
One can easily derive by substituting $f_{n+1}$ in Cassini's formula that $%
f_{n-1}^{2}+f_{n}f_{n-1}-f_{n}^{2}=(-1)^{n+1}$.

Now, let $A=(a_{nk})$ be an infinite matrix and list the following
conditions: 
\begin{eqnarray}
&&\sup_{n\in\mathbb{N}}\sum_{k}\left \vert a_{nk}\right \vert<\infty
\label{eq21} \\
&&\lim_{n\to\infty}a_{nk}=0~\text{ for each }~k\in\mathbb{N}  \label{eq22} \\
&&\exists\alpha_{k}\in\mathbb{C}\ni\lim_{n\to\infty}a_{nk}=\alpha_{k}~\text{
for each }~k\in\mathbb{N}  \label{eq23} \\
&&\lim_{n\to\infty}\sum_{k}a_{nk}=0  \label{eq24} \\
&&\exists\alpha\in\mathbb{C}\ni\lim_{n\to\infty}\sum_{k}a_{nk}=\alpha
\label{eq25} \\
&&\sup_{K\in\mathcal{F}}\sum_{n}\left\vert\sum_{k\in K}a_{nk}\right\vert
<\infty,  \label{eq26}
\end{eqnarray}
where $\mathbb{C}$ and $\mathcal{F}$ denote the set of all complex numbers
and the collection of all finite subsets of $\mathbb{N}$, respectively.

Now, we may give the following lemma due to Stieglitz and Tietz \cite{30} on
the characterization of the matrix transformations between some classical
sequence spaces.

\begin{lem}
\label{l21}\label{l21} The following statements hold:

\begin{enumerate}
\item[(a)] $A=(a_{nk})\in(c_{0},c_{0})$ if and only if (\ref{eq21}) and (\ref%
{eq22}) hold.

\item[(b)] $A=(a_{nk})\in(c_{0},c)$ if and only if (\ref{eq21}) and (\ref%
{eq23}) hold.

\item[(c)] $A=(a_{nk})\in(c,c_{0})$ if and only if (\ref{eq21}), (\ref{eq22}%
) and (\ref{eq24}) hold.

\item[(d)] $A=(a_{nk})\in(c,c)$ if and only if (\ref{eq21}), (\ref{eq23})
and (\ref{eq25}) hold.

\item[(e)] $A=(a_{nk})\in(c_{0},\ell_{\infty})=(c,\ell_{\infty})$ if and
only if condition (\ref{eq21}) holds.

\item[(f)] $A=(a_{nk})\in(c_{0},\ell_{1})=(c,\ell_{1})$ if and only if
condition (\ref{eq26}) holds.
\end{enumerate}
\end{lem}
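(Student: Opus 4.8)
The plan is to derive all six statements from a small toolkit and to let the later parts lean on the earlier ones. The three tools are: testing $A$ against the unit sequences $e^{(k)}\in c_{0}$ and the constant sequence $e\in c$; the Banach--Steinhaus theorem (a matrix map between $BK$-spaces is continuous, so the rows $A_{n}$ form a pointwise-bounded, hence uniformly bounded, family of functionals); and the bootstrapping afforded by the inclusion $c_{0}\subset c$ together with the decomposition $x=(x-Le)+Le$ of a convergent sequence with limit $L$ into a null part and a multiple of $e$. I would organise the argument so that no estimate is repeated.

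For necessity in (e), (a) and (b): if $A$ maps $c_{0}$ into $\ell_{\infty}$, then $Ax$ is bounded for each $x\in c_{0}$, so the row functionals are pointwise bounded on the Banach space $c_{0}$, and Banach--Steinhaus yields $\sup_{n}\Vert A_{n}\Vert_{c_{0}^{\ast}}=\sup_{n}\sum_{k}|a_{nk}|<\infty$, which is (\ref{eq21}). Feeding $e^{(k)}$ into $A$ returns the $k$-th column $(a_{nk})_{n}$, whose membership in $c_{0}$ or $c$ produces (\ref{eq22}) or (\ref{eq23}). For sufficiency I would use tail-splitting: fix $x\in c_{0}$ and $\varepsilon>0$, choose $N$ with $|x_{k}|<\varepsilon$ for $k>N$, and split $A_{n}(x)$ at the index $N$; the finite head tends to $0$ (resp.\ to $\sum_{k=0}^{N}\alpha_{k}x_{k}$) by (\ref{eq22}) (resp.\ (\ref{eq23})), while the tail is bounded by $\varepsilon\sup_{n}\sum_{k}|a_{nk}|$ uniformly in $n$ by (\ref{eq21}). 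Part (e) is immediate from (\ref{eq21}) alone, and the equalities with the $c$-domain hold because $c_{0}\subset c$ forces one inclusion while the listed conditions coincide.

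Parts (c) and (d) I would obtain by bootstrapping. Since $c_{0}\subset c$, membership in $(c,c_{0})$ or $(c,c)$ forces membership in $(c_{0},c_{0})$ or $(c_{0},c)$, giving (\ref{eq21}) and (\ref{eq22}) (resp.\ (\ref{eq23})) for free; testing on $e$ then supplies (\ref{eq24}) (resp.\ (\ref{eq25})). For sufficiency I would write $x\in c$ with $x_{k}\to L$ as $x=(x-Le)+Le$, so that $A_{n}(x)=A_{n}(x-Le)+L\sum_{k}a_{nk}$, and apply part (a) (resp.\ (b)) to the null sequence $x-Le$ and (\ref{eq24}) (resp.\ (\ref{eq25})) to the second term.

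The main obstacle is part (f), since the target $\ell_{1}$ is governed not by a sup of row sums but by the set condition (\ref{eq26}). For necessity the closed graph theorem makes $A\colon c_{0}\to\ell_{1}$ bounded, and evaluating at the finitely supported $x^{K}:=\sum_{k\in K}e^{(k)}$, with $\Vert x^{K}\Vert_{\infty}=1$, gives $\sum_{n}|\sum_{k\in K}a_{nk}|=\Vert Ax^{K}\Vert_{1}\le\Vert A\Vert$, so the supremum over finite $K$ is finite. Sufficiency is the delicate point: writing $M$ for the bound in (\ref{eq26}), choosing $K$ to isolate the positive (resp.\ negative) entries of a fixed row shows $\sum_{k}|a_{nk}|\le 2M$, so the rows lie in $\ell_{1}=c_{0}^{\ast}$ and each $A_{n}$ is continuous. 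The key lemma is that $\sum_{n}|A_{n}(y)|\le 2M$ for every finitely supported $y$ with $\Vert y\Vert_{\infty}\le 1$; I would prove it by convexity, noting that $y\mapsto\Vert Ay\Vert_{1}$ is convex on the cube $[-1,1]^{\mathrm{supp}(y)}$ and hence maximised at a sign vector $\epsilon$, for which $\sum_{n}|\sum_{k}\epsilon_{k}a_{nk}|$ splits into the contributions of $\{k:\epsilon_{k}=1\}$ and $\{k:\epsilon_{k}=-1\}$, each bounded by $M$ via (\ref{eq26}). Passing to a general $x\in c_{0}$ via its truncations $x^{[N]}$, continuity of the $A_{n}$ gives $\sum_{n=0}^{P}|A_{n}(x)|=\lim_{N}\sum_{n=0}^{P}|A_{n}(x^{[N]})|\le 2M$ for each $P$, and letting $P\to\infty$ yields $Ax\in\ell_{1}$. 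Finally the identity $(c_{0},\ell_{1})=(c,\ell_{1})$ follows from the decomposition $x=(x-Le)+Le$ once a Fatou estimate, namely $\sum_{n}|\sum_{k}a_{nk}|\le\liminf_{N}\sum_{n}|\sum_{k=0}^{N}a_{nk}|\le M$, confirms $Ae\in\ell_{1}$ under (\ref{eq26}).
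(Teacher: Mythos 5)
Your proposal is correct, but it is doing genuinely more work than the paper, which offers no proof of this lemma at all: the statement is quoted as a known result (attributed to Stieglitz and Tietz, with reference \cite{30}), and these are classical characterizations of Silverman--Toeplitz/Kojima--Schur type. Your self-contained derivation is sound and well organized: Banach--Steinhaus plus testing on $e^{(k)}$ and $e$ for necessity, tail-splitting against the uniform bound (\ref{eq21}) for sufficiency in (a) and (b), the decomposition $x=(x-Le)+Le$ to bootstrap (c) and (d) from (a) and (b), and the convexity/sign-vector argument reducing (\ref{eq26}) to the cube's extreme points for (f). Two small points deserve an explicit word if you write this out in full. First, before invoking Banach--Steinhaus you must know that each row functional $A_{n}$ is a bounded functional on $c_{0}$, i.e.\ that $(a_{nk})_{k}\in\ell_{1}$; this follows from the mere convergence of $\sum_{k}a_{nk}x_{k}$ for all $x\in c_{0}$ (the identification $c_{0}^{\beta}=\ell_{1}$), but it is a separate step, not a consequence of pointwise boundedness. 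Second, in the sufficiency direction of (b), knowing that the head converges and the tail is uniformly small shows the limit points of $\{A_{n}(x)\}_{n}$ cluster within $O(\varepsilon)$ of each other, so you should phrase the conclusion as a Cauchy estimate, $\limsup_{n,m}|A_{n}(x)-A_{m}(x)|\le 2\varepsilon\sup_{n}\sum_{k}|a_{nk}|$, together with the Fatou bound $\sum_{k}|\alpha_{k}|\le\sup_{n}\sum_{k}|a_{nk}|$ to identify the limit as $\sum_{k}\alpha_{k}x_{k}$. With those glosses the argument is complete and gives exactly the six equivalences as stated.
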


\section{The Fibonacci Difference Spaces of Null and Convergent Sequences}

In this section, we define the spaces $c_{0}(\widehat{F})$ and $c(\widehat{F}%
)$ of Fibonacci null and Fibonacci convergent sequences. Also, we present
some inclusion theorems and construct the Schauder bases of the spaces $%
c_{0}(\widehat{F})$ and $c(\widehat{F})$.

Recently, Kara \cite{28} has defined the sequence space $\ell_{p}(\widehat{F}%
)$ as follows: 
\begin{eqnarray*}
\ell_{p}(\widehat{F})=\left\{x\in \omega:\widehat{F}x\in\ell_{p}\right%
\},~~(1\leq p\leq\infty),
\end{eqnarray*}
where $\widehat{F}=(\widehat{f}_{nk})$ is the double band matrix defined by
the sequence $(f_{n})$ of Fibonacci numbers as follows 
\begin{eqnarray}  \label{eqy31}
\widehat{f}_{nk}=\left\{%
\begin{array}{ccl}
-\frac{f_{n+1}}{f_{n}} & , & k=n-1, \\ 
\frac{f_{n}}{f_{n+1}} & , & k=n, \\ 
0 & , & 0\leq k<n-1\text{ or }k>n%
\end{array}
\right.
\end{eqnarray}
for all $k,n\in\mathbb{N}$. Also, in \cite{31}, Kara et al. have
characterized some classes of compact operators on the spaces $\ell_{p}(%
\widehat{F})$ and $\ell_{\infty}(\widehat {F})$, where $1\leq p<\infty.$

One can derive by a straightforward calculation that the inverse $\widehat {F%
}^{-1}=(g_{nk})$ of the Fibonacci matrix $\widehat{F}$ is given by 
\begin{eqnarray*}
g_{nk}=\left\{%
\begin{array}{ccl}
\frac{f_{n+1}^{2}}{f_{k}f_{k+1}} & , & 0\leq k\leq n, \\ 
0 & , & k>n%
\end{array}
\right.
\end{eqnarray*}
for all $k,n\in\mathbb{N}$.

Now, we introduce the Fibonacci difference sequence spaces $c_{0}(\widehat{F}%
)$ and $c(\widehat{F})$ as the set of all sequences whose $\widehat{F}$%
-transforms are in the spaces $c_{0}$ and $c$, respectively, i.e., 
\begin{eqnarray*}
c_{0}(\widehat{F})&=&\left \{x=(x_{n})\in \omega:\lim_{n\to\infty}\left( 
\frac{f_{n}}{f_{n+1}}x_{n}-\frac{f_{n+1}}{f_{n}}x_{n-1}\right)=0\right\}, \\
c(\widehat{F})&=&\left \{x=(x_{n})\in \omega:\exists l\in\mathbb{C}%
\ni\lim_{n\to\infty}\left(\frac {f_{n}}{f_{n+1}}x_{n}-\frac{f_{n+1}}{f_{n}}%
x_{n-1}\right)=l\right\}.
\end{eqnarray*}
With the notation of (\ref{eq12}), the spaces $c_{0}(\widehat{F})$ and $c(%
\widehat{F})$ can be redefined as follows: 
\begin{eqnarray*}  \label{eq31}
c_{0}(\widehat{F})=(c_{0})_{\widehat{F}}\text{ \ and \ }c(\widehat {F})=c_{%
\widehat{F}}.
\end{eqnarray*}

Define the sequence $y=(y_{n})$ by the $\widehat{F}$-transform of a sequence 
$x=(x_{n})$, i.e., 
\begin{eqnarray}  \label{eq32}
y_{n}=\widehat{F}_{n}(x)=\left\{%
\begin{array}{ccl}
x_{0} & , & n=0, \\ 
\frac{f_{n}}{f_{n+1}}x_{n}-\frac{f_{n+1}}{f_{n}}x_{n-1} & , & n\geq1%
\end{array}%
\right.
\end{eqnarray}%
for all $n\in\mathbb{N}$. Throughout the text, we suppose that the sequences 
$x=(x_{k})$ and $y=(y_{k})$ are connected with the relation (\ref{eq32}).

\begin{thm}
The sets $c_{0}(\widehat{F})$ and $c(\widehat{F})$ are the linear spaces
with the co-ordinatewise addition and scalar multiplication which are the
BK-spaces with the norm $\left \Vert x\right \Vert _{c_{0}(\widehat{F}%
)}=\left \Vert x\right \Vert _{c(\widehat{F})}=\Vert \widehat{F}%
x\Vert_{_{\infty}}$.
\end{thm}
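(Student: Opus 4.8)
The plan is to exploit that $\widehat{F}$ is a triangle, i.e.\ a lower triangular matrix whose diagonal entries $\widehat{f}_{nn}=f_{n}/f_{n+1}$ are all nonzero, so that $x\mapsto\widehat{F}x$ is a bijection of $\omega$ onto itself with the explicit triangular inverse $\widehat{F}^{-1}=(g_{nk})$ recorded above. Everything then reduces to the general principle that the matrix domain of a triangle in a $BK$-space is again a $BK$-space carrying the transported norm; since $c_{0}$ and $c$ are $BK$-spaces under $\Vert\cdot\Vert_{\infty}$, we are in exactly this situation, and I will carry the argument out explicitly.

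First I would dispose of the algebraic claims. Linearity of $c_{0}(\widehat{F})$ and $c(\widehat{F})$ under coordinatewise operations is immediate: $\widehat{F}$ acts linearly on $\omega$ and $c_{0},c$ are linear subspaces of $\omega$, so their $\widehat{F}$-preimages $(c_{0})_{\widehat{F}}$ and $c_{\widehat{F}}$ are linear subspaces as well. That $\Vert x\Vert:=\Vert\widehat{F}x\Vert_{\infty}$ is a norm follows from the fact that $\Vert\cdot\Vert_{\infty}$ is a norm, together with definiteness: if $\Vert\widehat{F}x\Vert_{\infty}=0$ then $\widehat{F}x=0$, and because $\widehat{F}$ is a triangle this forces $x=0$ by solving for $x_{0},x_{1},\dots$ successively.

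The core of the proof is to show that the linear map $T\colon c_{0}(\widehat{F})\to c_{0}$, $Tx=\widehat{F}x$, is an isometric isomorphism onto $c_{0}$ (and likewise $T\colon c(\widehat{F})\to c$). It is norm-preserving by the very definition of the norm, injective because $\widehat{F}$ is a triangle, and surjective because for any $y\in c_{0}$ the sequence $x=\widehat{F}^{-1}y$ satisfies $\widehat{F}x=y\in c_{0}$, so that $x\in c_{0}(\widehat{F})$ with $Tx=y$. Since $c_{0}$ is complete and $T$ is a surjective linear isometry, completeness of $c_{0}(\widehat{F})$ transfers back along $T^{-1}$; the identical reasoning with $c$ in place of $c_{0}$ handles $c(\widehat{F})$.

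Finally, for the $K$-space property I would verify continuity of each coordinate functional. Writing $y=\widehat{F}x$ and inverting via the triangle $\widehat{F}^{-1}$ gives $x_{k}=\sum_{j=0}^{k}g_{kj}y_{j}$, a \emph{finite} sum, whence $|x_{k}|\le\left(\sum_{j=0}^{k}|g_{kj}|\right)\Vert y\Vert_{\infty}=\left(\sum_{j=0}^{k}|g_{kj}|\right)\Vert x\Vert_{c_{0}(\widehat{F})}$. Thus $\pi_{k}$ is bounded, the coordinates are continuous, and each space is a $BK$-space. The only point demanding care is the surjectivity of $T$, where one must confirm that $\widehat{F}^{-1}y$ genuinely lies in $\omega$ and that applying $\widehat{F}$ returns $y$; this is precisely what the triangularity of $\widehat{F}$ and the explicit form of $\widehat{F}^{-1}$ guarantee, so no real obstacle remains.
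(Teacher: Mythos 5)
Your argument is correct and is precisely the routine verification the paper omits: it is the standard fact that the matrix domain of a triangle in a $BK$-space is again a $BK$-space under the transported norm, carried out explicitly via the triangular inverse $\widehat{F}^{-1}=(g_{nk})$. All the key points (definiteness of the norm from triangularity, surjectivity and completeness transferred along the isometry onto $c_{0}$ resp.\ $c$, and boundedness of the coordinate functionals via the finite sums $x_{k}=\sum_{j=0}^{k}g_{kj}y_{j}$) are handled properly, so nothing is missing.
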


\begin{proof}
This is a routine verification and so we omit the detail.
\end{proof}

\begin{rem}
One can easily check that the absolute property does not hold on the spaces $%
c_{0}(\widehat{F})$ and $c(\widehat{F})$, that is $\left \Vert x\right \Vert
_{c_{0}(\widehat{F})}\neq \left \Vert \left \vert x\right \vert \right \Vert
_{c_{0}(\widehat{F})}$ and $\left \Vert x\right \Vert _{c(\widehat{F})} \neq
\left \Vert \left \vert x\right \vert \right \Vert _{c(\widehat{F})}$ for at
least one sequence in the spaces $c_{0}(\widehat{F})$ and $c(\widehat{F})$,
and this says us that $c_{0}(\widehat{F})$ and $c(\widehat{F})$ are the
sequence spaces of non-absolute type, where $\left \vert x\right \vert
=(\left \vert x_{k}\right \vert ).$
\end{rem}

\begin{thm}
The Fibonacci difference sequence spaces $c_{0}(\widehat{F})$ and $c(%
\widehat{F})$ of non-absolute type are linearly norm isomorphic to the
spaces $c_{0}$ and $c$, respectively, i.e., $c_{0}(\widehat{F})\cong c_{0}$
and $c(\widehat{F})\cong c.$
\end{thm}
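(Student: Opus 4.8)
The plan is to exhibit the $\widehat{F}$-transform itself as the desired isomorphism. Define the linear operator $T\colon c_{0}(\widehat{F})\to c_{0}$ by $Tx=\widehat{F}x=y$ through the relation (\ref{eq32}); the convergent case is handled by the analogous map $T\colon c(\widehat{F})\to c$. Linearity of $T$ is immediate, since $\widehat{F}$ acts as a matrix and hence linearly on $\omega$, and $T$ does map $c_{0}(\widehat{F})$ into $c_{0}$ by the very definition $c_{0}(\widehat{F})=(c_{0})_{\widehat{F}}$. So the whole task reduces to checking that $T$ is a norm-preserving bijection.

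First I would dispose of norm preservation and injectivity. Norm preservation is built into the definition of the space: $\|Tx\|_{\infty}=\|\widehat{F}x\|_{\infty}=\|x\|_{c_{0}(\widehat{F})}$ for every $x\in c_{0}(\widehat{F})$. For injectivity, suppose $Tx=0$; since $\widehat{F}$ is a triangle whose diagonal entries $f_{n}/f_{n+1}$ are all nonzero, the homogeneous system $\widehat{F}x=0$ forces $x=0$ by back-substitution, so $\ker T=\{0\}$.

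The one genuinely substantive step is surjectivity, and here the explicit inverse $\widehat{F}^{-1}=(g_{nk})$ recorded earlier does all the work. Given an arbitrary $y=(y_{k})\in c_{0}$, I would set
\begin{eqnarray*}
x_{n}=\sum_{k=0}^{n}\frac{f_{n+1}^{2}}{f_{k}f_{k+1}}\,y_{k},\qquad n\in\mathbb{N};
\end{eqnarray*}
each $x_{n}$ is a finite sum, so $x$ is a well-defined element of $\omega$ with no convergence question to settle. Applying $\widehat{F}$ and using $\widehat{F}\widehat{F}^{-1}=I$ yields $\widehat{F}x=y\in c_{0}$, which shows simultaneously that $x\in c_{0}(\widehat{F})$ and that $Tx=y$; hence $T$ is onto.

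Combining these facts, $T$ is a norm-preserving linear bijection, so $c_{0}(\widehat{F})\cong c_{0}$. The convergent case $c(\widehat{F})\cong c$ is proved verbatim, replacing $c_{0}$ by $c$ throughout: the existence of the limit $l$ in the definition of $c(\widehat{F})$ is exactly convergence of $y=\widehat{F}x$, so the same map $T$ restricts and corestricts correctly. I expect the main (and only mild) obstacle to be pinning down surjectivity cleanly, i.e.\ confirming from the formula for $\widehat{F}^{-1}$ that the candidate preimage $x$ satisfies $\widehat{F}x=y$; everything else follows at once from the definition of the norm and the triangularity of $\widehat{F}$.
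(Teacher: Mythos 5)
Your proposal is correct and follows essentially the same route as the paper: the isomorphism is the map $x\mapsto\widehat{F}x$, with injectivity from triangularity, surjectivity via the explicit preimage $x_{n}=\sum_{k=0}^{n}\frac{f_{n+1}^{2}}{f_{k}f_{k+1}}y_{k}$ built from $\widehat{F}^{-1}$, and norm preservation directly from the definition of $\Vert\cdot\Vert_{c_{0}(\widehat{F})}$. The only cosmetic difference is that the paper verifies $\widehat{F}x=y$ by an explicit telescoping computation rather than by citing $\widehat{F}\widehat{F}^{-1}=I$, which is legitimate here since both factors are triangles.
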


\begin{proof}
Since the proof is similar for the space $c(\widehat{F})$, we consider only
the space $c_{0}(\widehat{F})$. With the notation of (\ref{eq32}), we
consider the transformation $T$ defined from $c_{0}(\widehat{F})$ to $c_{0}$
by $x\mapsto y=\widehat{F}x$. It is trivial that the map $T$ is linear and
injective. Furthermore, let $y=(y_{k})\in c_{0}$ and define the sequence $%
x=(x_{k})$ by 
\begin{eqnarray}  \label{eq33}
x_{k}=\sum_{j=0}^{k}\frac{f_{k+1}^{2}}{f_{j}f_{j+1}}y_{j}~\text{ for all }%
~k\in\mathbb{N}.
\end{eqnarray}
Then we have 
\begin{eqnarray*}
\lim_{k\to\infty}\widehat{F}_{k}(x)=\lim_{k\to\infty}\left( \frac{f_{k}}{%
f_{k+1}}\sum_{j=0}^{k}\frac{f_{k+1}^{2}} {f_{j}f_{j+1}}y_{j}-\frac{f_{k+1}}{%
f_{k}}\sum_{j=0}^{k-1}\frac{f_{k}^{2} }{f_{j}f_{j+1}}y_{j}\right)
=\lim_{k\to\infty}y_{k}=0
\end{eqnarray*}
which says us that $x\in c_{0}(\widehat{F}).$ Additionally, we have for
every $x\in c_{0}(\widehat{F})$ that 
\begin{eqnarray*}
\left \Vert x\right \Vert _{c_{0}(\widehat{F})}=\sup_{k\in\mathbb{N}}\left
\vert \frac{f_{k} }{f_{k+1}}x_{k}-\frac{f_{k+1}}{f_{k}}x_{k-1}\right \vert
=\sup_{k\in\mathbb{N}}\left \vert y_{k}\right \vert =\left \Vert y\right
\Vert _{\infty}<\infty.
\end{eqnarray*}
Consequently we see from here that $\widehat{F}$ is surjective and norm
preserving. Hence, $\widehat{F}$ is a linear bijection which shows that the
spaces $c_{0}(\widehat{F})$ and $c_{0}$ are linearly isomorphic. This
concludes the proof.
\end{proof}

Now, we give some inclusion relations concerning with the spaces $c_{0}(%
\widehat{F})$ and $c(\widehat{F})$.

\begin{thm}
The inclusion $c_{0}(\widehat{F})\subset c(\widehat{F})$ strictly holds.
\end{thm}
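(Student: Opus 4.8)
The plan is to prove the statement in two parts: first the set-theoretic inclusion $c_{0}(\widehat{F})\subseteq c(\widehat{F})$, and then its properness by exhibiting an explicit sequence lying in $c(\widehat{F})$ but not in $c_{0}(\widehat{F})$.

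For the inclusion I would argue directly from the definitions through the matrix domain. If $x\in c_{0}(\widehat{F})$, then by definition $\widehat{F}x\in c_{0}$. Since every null sequence converges, i.e. $c_{0}\subset c$, it follows that $\widehat{F}x\in c$, and hence $x\in c(\widehat{F})$. This part requires no computation beyond the trivial containment $c_{0}\subset c$ recorded in the Introduction.

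For properness I would construct a witness by pulling back through the inverse $\widehat{F}^{-1}=(g_{nk})$ computed above. The idea is to select a sequence $y\in c\setminus c_{0}$ and take its $\widehat{F}^{-1}$-image; the most convenient choice is the constant sequence $y=e=(1,1,1,\ldots)$, which converges to $1$ and so belongs to $c\setminus c_{0}$. Define
\[
x_{k}=\sum_{j=0}^{k}\frac{f_{k+1}^{2}}{f_{j}f_{j+1}}\qquad\text{for all }k\in\mathbb{N}.
\]
Specializing the computation carried out in the proof of the preceding isomorphism theorem to $y_{j}=1$ for all $j$, the two partial sums collapse by the cancellation
\[
f_{k}f_{k+1}\sum_{j=0}^{k}\frac{1}{f_{j}f_{j+1}}-f_{k}f_{k+1}\sum_{j=0}^{k-1}\frac{1}{f_{j}f_{j+1}}=1,
\]
so that $\widehat{F}_{k}(x)=1$ for every $k$, i.e. $\widehat{F}x=e$. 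Consequently $\widehat{F}x\in c$ while $\widehat{F}x\notin c_{0}$, which shows $x\in c(\widehat{F})$ but $x\notin c_{0}(\widehat{F})$. This establishes that the inclusion is strict.

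The main (and essentially only) obstacle is the verification that the explicitly defined $x$ indeed satisfies $\widehat{F}x=e$; this reduces to the same telescoping-type cancellation already used to prove the surjectivity of $T$ in the isomorphism theorem, and is therefore routine. Everything else is immediate from the definitions of the spaces $c_{0}(\widehat{F})$ and $c(\widehat{F})$.
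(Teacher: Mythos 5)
Your proof is correct and follows essentially the same strategy as the paper: the inclusion is immediate from $c_{0}\subset c$, and properness is shown by exhibiting an explicit sequence whose $\widehat{F}$-transform lies in $c\setminus c_{0}$ via a telescoping computation. The only difference is the choice of witness --- you take $x=\widehat{F}^{-1}e$ (which is exactly the sequence $c^{(-1)}$ appearing later in the basis of $c(\widehat{F})$, with $\widehat{F}x=e$), whereas the paper uses $x_{k}=\sum_{j=0}^{k}f_{k+1}^{2}/f_{j}^{2}$, for which $\widehat{F}_{k}(x)=f_{k+1}/f_{k}\to\varphi$; both computations check out.
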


\begin{proof}
It is clear that the inclusion $c_{0}(\widehat{F})\subset c(\widehat{F})$
holds. Further, to show that this inclusion is strict, consider the sequence 
$x=(x_{k})=\left(\sum_{j=0}^{k}f_{k+1}^{2}/f_{j}^{2}\right)$. Then, we
obtain by (\ref{eq32}) for all $k\in\mathbb{N}$ that 
\begin{eqnarray*}
\widehat{F}_{k}(x)=\frac{f_{k}}{f_{k+1}}\sum_{j=0}^{k}\frac{f_{k+1}^{2}} {%
f_{j}^{2}}-\frac{f_{k+1}}{f_{k}}\sum_{j=0}^{k-1}\frac{f_{k}^{2}}{f_{j}^{2} }=%
\frac{f_{k+1}}{f_{k}}
\end{eqnarray*}
which shows that $\widehat{F}_{k}(x)=\frac{f_{k+1}}{f_{k}}\to\varphi$, as $%
k\to\infty$. That is to say that $\widehat{F}(x)\in c\setminus c_{0}$. Thus,
the sequence $x$ is in $c(\widehat{F})$ but not in $c_{0}(\widehat{F}).$
Hence, the inclusion $c_{0}(\widehat{F})\subset c(\widehat{F})$ is strict.
\end{proof}

\begin{thm}
The space $\ell_{\infty}$ does not include the spaces $c_{0}(\widehat{F})$
and $c(\widehat{F})$.
\end{thm}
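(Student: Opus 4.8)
The plan is to prove the stronger statement that already $c_{0}(\widehat{F})\not\subset\ell_{\infty}$; since the inclusion $c_{0}(\widehat{F})\subset c(\widehat{F})$ holds by the previous theorem, this simultaneously shows that $c(\widehat{F})$ fails to be contained in $\ell_{\infty}$ as well. Thus it suffices to exhibit a single sequence $x$ lying in $c_{0}(\widehat{F})$ which is unbounded. The whole statement will then follow at once, with no separate argument needed for $c(\widehat{F})$.

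To manufacture such a sequence I would work through the isomorphism of the previous theorem and build $x$ from a prescribed $\widehat{F}$-transform $y$ by means of the inverse formula (\ref{eq33}). The cleanest choice is to take $y=e^{(0)}=(1,0,0,\ldots)$, which visibly belongs to $c_{0}$. Substituting into (\ref{eq33}) and using $f_{0}=f_{1}=1$, only the $j=0$ term survives, giving
\begin{eqnarray*}
x_{k}=\frac{f_{k+1}^{2}}{f_{0}f_{1}}=f_{k+1}^{2}\quad\text{for all }k\in\mathbb{N},
\end{eqnarray*}
so that $x=(f_{k+1}^{2})_{k\in\mathbb{N}}=(1,4,9,25,\ldots)$.

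It then remains to confirm two things. First, $x$ is unbounded: since $f_{k+1}^{2}\to\infty$ as $k\to\infty$, we have $x\notin\ell_{\infty}$. Second, $x$ genuinely lies in $c_{0}(\widehat{F})$, which I would verify directly from the definition (\ref{eq32}) rather than merely quoting the isomorphism. Indeed $\widehat{F}_{0}(x)=x_{0}=1$, while for $n\geq1$ the Fibonacci factors cancel exactly as
\begin{eqnarray*}
\widehat{F}_{n}(x)=\frac{f_{n}}{f_{n+1}}f_{n+1}^{2}-\frac{f_{n+1}}{f_{n}}f_{n}^{2}=f_{n}f_{n+1}-f_{n+1}f_{n}=0,
\end{eqnarray*}
so that $\widehat{F}x=e^{(0)}\in c_{0}$ and hence $x\in c_{0}(\widehat{F})$.

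I do not expect any genuine obstacle here. The only mild subtlety is choosing $y$ so that the reconstructed $x$ is unbounded while its $\widehat{F}$-transform stays null, and the single-impulse choice $y=e^{(0)}$ makes the reconstruction in (\ref{eq33}) collapse to one term and reduces the membership check to an exact cancellation. Any null sequence $y$ whose reconstructed partial sums grow without bound would serve equally well, but $e^{(0)}$ yields the shortest computation and the most transparent example.
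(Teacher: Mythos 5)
Your proposal is correct and coincides with the paper's own argument: the paper likewise takes $x=(f_{k+1}^{2})$, observes that $\widehat{F}x=e^{(0)}\in c_{0}$ while $x$ is unbounded, and concludes for both spaces via the inclusion $c_{0}(\widehat{F})\subset c(\widehat{F})$. Your explicit verification of the cancellation $\frac{f_{n}}{f_{n+1}}f_{n+1}^{2}-\frac{f_{n+1}}{f_{n}}f_{n}^{2}=0$ is a welcome detail the paper leaves implicit.
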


\begin{proof}
Let us consider the sequence $x=(x_{k})=(f_{k+1}^{2})$. Since $%
f_{k+1}^{2}\rightarrow\infty$ as $k\rightarrow \infty$ and $\widehat{F}%
x=e^{(0)}=(1,0,0,\ldots )$, the sequence $x$ is in the space $c_{0}(\widehat{%
F})$ but is not in the space $\ell_{\infty}$. This shows that the space $%
\ell_{\infty}$ does not include both the space $c_{0}(\widehat{F})$ and the
space $c(\widehat{F})$, as desired.
\end{proof}

\begin{thm}
The inclusions $c_{0}\subset c_{0}(\widehat{F})$ and $c\subset c(\widehat{F}%
) $ strictly hold.
\end{thm}

\begin{proof}
Let $\lambda=c_{0}$ or $c$. Since the matrix $\widehat{F}=(f_{nk})$
satisfies the conditions 
\begin{eqnarray*}
&&\sup_{n\in\mathbb{N}}\sum_{k}\left \vert f_{nk}\right \vert =\sup_{n\in%
\mathbb{N}}\left( \frac{f_{n}}{f_{n+1}}+\frac{f_{n+1}}{f_{n}}\right) =2+%
\frac{1}{2}=\frac{5}{2}, \\
&&\lim_{n\to\infty}f_{nk}=0, \\
&& \lim_{n\to\infty}\sum_{k}f_{nk}=\lim_{n\to\infty}\left( \frac{f_{n}}{%
f_{n+1}} -\frac{f_{n+1}}{f_{n}}\right) =\frac{1}{\varphi}-\varphi
\end{eqnarray*}
we conclude by Parts (a) and (c) of Lemma \ref{l21} that $\widehat{F}%
\in(\lambda,\lambda)$. This leads that $\widehat{F}x\in \lambda$ for any $%
x\in \lambda$. Thus, $x\in \lambda_{\widehat{F}}$. This shows that $\lambda
\subset \lambda _{\widehat{F}}$.

Now, let $x=(x_{k})=(f_{k+1}^{2}).$ Then, it is clear that $x\in \lambda_{%
\widehat{F}}\setminus \lambda$. This says that the inclusion $\lambda
\subset \lambda_{\widehat{F}}$ is strict.
\end{proof}

\begin{thm}
The spaces $c_{0}(\widehat{F})$ and $c(\widehat{F})$ are not solid.
\end{thm}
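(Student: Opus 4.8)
The plan is to exhibit a single explicit counterexample that works simultaneously for both spaces, by exploiting the fact that solidity is a condition about \emph{moduli}: a space fails to be solid as soon as we can find a member whose entries, after being rescaled by factors of modulus at most one, leave the space. Since the only freedom that preserves $|u_k|\le|x_k|$ while potentially destroying membership is a change of signs, the natural move is to take a sequence already known to lie in $c_{0}(\widehat{F})$ and flip its signs so that the cancellation built into $\widehat{F}$ is turned into reinforcement.

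Concretely, I would start from the sequence $x=(x_k)=(f_{k+1}^{2})$, which was shown in the proof that $\ell_{\infty}$ does not contain $c_{0}(\widehat{F})$ to satisfy $\widehat{F}x=e^{(0)}=(1,0,0,\ldots)\in c_{0}$; hence $x\in c_{0}(\widehat{F})\subset c(\widehat{F})$. Now define $u=(u_k)$ by $u_{k}=(-1)^{k}f_{k+1}^{2}$, so that $|u_{k}|=f_{k+1}^{2}=|x_{k}|$ for every $k\in\mathbb{N}$. Consequently $u$ belongs to $\widetilde{c_{0}(\widehat{F})}$ and to $\widetilde{c(\widehat{F})}$, since it is dominated entrywise in modulus by the member $x$ of each space. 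It therefore remains only to check that $u$ itself belongs to neither space.

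The key computation is to evaluate $\widehat{F}_{k}(u)$ from (\ref{eq32}). For $k\ge1$ one finds
\begin{eqnarray*}
\widehat{F}_{k}(u)
=\frac{f_{k}}{f_{k+1}}(-1)^{k}f_{k+1}^{2}-\frac{f_{k+1}}{f_{k}}(-1)^{k-1}f_{k}^{2}
=(-1)^{k}f_{k}f_{k+1}+(-1)^{k}f_{k}f_{k+1}
=2(-1)^{k}f_{k}f_{k+1},
\end{eqnarray*}
so that the sign flip has converted the exact cancellation seen for $x$ into a doubling. Because $f_{k}f_{k+1}\to\infty$, the sequence $\widehat{F}u$ oscillates with unboundedly growing magnitude; in particular $\widehat{F}u\notin\ell_{\infty}\supset c$, and therefore $\widehat{F}u\notin c$. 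This shows $u\notin c(\widehat{F})$, and \emph{a fortiori} $u\notin c_{0}(\widehat{F})$.

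Putting these together, $u\in\widetilde{c_{0}(\widehat{F})}\setminus c_{0}(\widehat{F})$ and $u\in\widetilde{c(\widehat{F})}\setminus c(\widehat{F})$, so the inclusions $\widetilde{c_{0}(\widehat{F})}\subset c_{0}(\widehat{F})$ and $\widetilde{c(\widehat{F})}\subset c(\widehat{F})$ both fail, and neither space is solid. I do not expect any genuine obstacle here: once one recognizes that solidity hinges on absolute values and chooses the alternating-sign counterexample, the only substantive point is the short Fibonacci calculation above, which shows the cancellation in $\widehat{F}x$ becoming the divergence in $\widehat{F}u$.
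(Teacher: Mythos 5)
Your proposal is correct and is essentially the paper's own argument: the paper likewise takes $u_k=f_{k+1}^{2}\in c_0(\widehat{F})$, multiplies by the signs $(-1)^{k+1}$, and computes $\widehat{F}_k$ of the result to be $2(-1)^{k+1}f_kf_{k+1}$, which is unbounded. The only cosmetic difference is that you phrase the failure directly via the set $\widetilde{\lambda}$ while the paper phrases it as $\ell_\infty c_0(\widehat{F})\not\subset c_0(\widehat{F})$, and your treatment of the $c(\widehat{F})$ case is slightly more explicit.
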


\begin{proof}
Consider the sequences $u=(u_{k})$ and $v=(v_{k})$ defined by $%
u_{k}=f_{k+1}^{2}$ and $v_{k}=(-1)^{k+1}$ for all $k\in\mathbb{N}$. Then, it
is clear that $u\in c_{0}(\widehat{F})$ and $v\in \ell_{\infty}$.
Nevertheless $uv=\{(-1)^{k+1}f_{k+1}^{2}\}$ is not in the space $c_{0}(%
\widehat{F})$, since 
\begin{eqnarray*}
\widehat{F}_{k}(uv)=\frac{f_{k}}{f_{k+1}}(-1)^{k+1}f_{k+1}^{2}-\frac{f_{k+1}%
}{f_{k}}(-1)^{k}f_{k}^{2}=2(-1)^{k+1}f_{k}f_{k+1}
\end{eqnarray*}%
for all $k\in\mathbb{N}$. This shows that the multiplication $%
\ell_{\infty}c_{0}(\widehat{F})$ of the spaces $\ell_{\infty}$ and $c_{0}(%
\widehat{F})$ is not a subset of $c_{0}(\widehat{F})$. Hence, the space $%
c_{0}(\widehat{F})$ is not solid.

It is clear here that if the spaces $c_{0}(\widehat{F})$ is replaced by the
space $c(\widehat{F}),$ then we obtain the fact that $c(\widehat{F})$ is not
solid. This completes the proof.
\end{proof}

It is known from Theorem 2.3 of Jarrah and Malkowsky \cite{ajm} that the
domain $\lambda_T$ of an infinite matrix $T=(t_{nk})$ in a normed sequence
space $\lambda$ has a basis if and only if $\lambda$ has a basis, if $T$ is
a triangle. As a direct consequence of this fact, we have:

\begin{cor}
Define the sequences $c^{(-1)}=\big\{c_{k}^{(-1)}\big\}_{k\in\mathbb{N}}$
and $c^{(n)}=\big\{c_{k}^{(n)}\big\}_{k\in\mathbb{N}}$ for every fixed $n\in%
\mathbb{N}$ by 
\begin{eqnarray*}
c_{k}^{(-1)}=\sum_{j=0}^{k}\frac{f_{k+1}^{2}}{f_{j}f_{j+1}}\text{ \ and \ }%
c_{k}^{(n)}=\left\{%
\begin{array}{ccl}
0 & , & 0\leq k\leq n-1 \\ 
\frac{f_{k+1}^{2}}{f_{n}f_{n+1}} & , & k\geq n.%
\end{array}%
\right.
\end{eqnarray*}
Then, the following statements hold:

\begin{enumerate}
\item[(a)] The sequence $\left\{c^{(n)}\right\}_{n=0}^{\infty}$ is a basis
for the space $c_{0}(\widehat{F})$ and every sequence $x\in c_{0}(\widehat{F}%
)$ has a unique representation $x=\sum_{n}\widehat{F}_{n}(x)c^{(n)}$.

\item[(b)] The sequence $\left\{c^{(n)}\right\}_{n=-1}^{\infty}$ is a basis
for the space $c(\widehat{F})$ and every sequence $z=(z_{n})\in c(\widehat{F}%
)$ has a unique representation $z=lc^{(-1)}+\sum_{n}\left[\widehat{F}%
_{n}(z)-l\right]c^{(n)}$, where $l=\lim_{n\to\infty}\widehat{F}_{n}(z)$.
\end{enumerate}
\end{cor}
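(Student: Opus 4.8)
The plan is to exploit the isomorphism $T:c_0(\widehat F)\to c_0$ (respectively $c(\widehat F)\to c$) together with the quoted result of Jarrah and Malkowsky, which guarantees only the \emph{existence} of a basis; our task is to verify that the explicitly written sequences $c^{(n)}$ are the images under $T^{-1}$ of the standard bases and hence do the job. First I would observe that the inverse map is given by the formula~(\ref{eq33}), namely $x_k=\sum_{j=0}^k \frac{f_{k+1}^2}{f_j f_{j+1}}y_j$, so that if $y=e^{(n)}$ then $x=c^{(n)}$; indeed, setting $y_j=\delta_{jn}$ in~(\ref{eq33}) collapses the sum to the single term $\frac{f_{k+1}^2}{f_n f_{n+1}}$ present exactly when $k\ge n$, which is precisely the stated definition of $c^{(n)}$. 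This identifies $\{c^{(n)}\}_{n\ge 0}=\{T^{-1}e^{(n)}\}_{n\ge 0}$.

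For part~(a), I would then argue as follows. Since $\{e^{(n)}\}_{n\ge 0}$ is a Schauder basis of $c_0$, every $y\in c_0$ has the unique expansion $y=\sum_n y_n e^{(n)}$ with convergence in the sup-norm. Applying the \emph{linear isometric isomorphism} $T^{-1}$ and using its continuity, we get $x=T^{-1}y=\sum_n y_n c^{(n)}$ with convergence in $\|\cdot\|_{c_0(\widehat F)}$. Because $y_n=\widehat F_n(x)$ by~(\ref{eq32}), this is exactly the asserted representation $x=\sum_n \widehat F_n(x)\,c^{(n)}$, and uniqueness transfers from $c_0$ through the bijection $T$. I would spell out the partial-sum estimate: $\big\|x-\sum_{n=0}^m \widehat F_n(x)c^{(n)}\big\|_{c_0(\widehat F)}=\big\|y-\sum_{n=0}^m y_n e^{(n)}\big\|_\infty\to 0$, the equality holding because $T$ preserves norms.

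For part~(b), the additional ingredient is the basis of $c$. Here I would use that $\{e,e^{(0)},e^{(1)},\ldots\}$ (with $e$ the all-ones sequence) is a Schauder basis of $c$: each $y\in c$ with limit $l$ has the unique expansion $y=l\,e+\sum_n(y_n-l)e^{(n)}$. Noting $c^{(-1)}=T^{-1}e$ (which follows from~(\ref{eq33}) with $y=e$, giving $c_k^{(-1)}=\sum_{j=0}^k \frac{f_{k+1}^2}{f_j f_{j+1}}$, exactly the stated formula), and that the limit $l=\lim_n \widehat F_n(z)$ of $y=Tz$ is an invariant of $z$, applying $T^{-1}$ yields $z=l\,c^{(-1)}+\sum_n[\widehat F_n(z)-l]c^{(n)}$ with convergence and uniqueness inherited from $c$.

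The main obstacle I anticipate is not conceptual but the careful bookkeeping of two points: confirming that the telescoping in~(\ref{eq32})--(\ref{eq33}) really reproduces the piecewise formulas for $c^{(n)}$ and $c^{(-1)}$ exactly as stated (including the cutoff $k\ge n$), and correctly invoking the standard basis of $c$ rather than $c_0$ in part~(b) so that the extra coordinate $c^{(-1)}$ carrying the limit is handled properly. Everything else is a routine transport of structure across the isometry $T$.
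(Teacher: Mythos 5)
Your proposal is correct and is essentially the argument the paper relies on: the paper states the corollary as a direct consequence of the Jarrah--Malkowsky theorem on matrix domains of triangles, whose content is exactly the transfer of the standard bases of $c_{0}$ and $c$ through the norm isomorphism $T=\widehat{F}$, and you have simply written out the verification (that $c^{(n)}=T^{-1}e^{(n)}$, $c^{(-1)}=T^{-1}e$, and that convergence and uniqueness of the expansions pass through the isometry) which the paper omits. No gaps.
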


\section{The alpha-, beta- and gamma-duals of the spaces $c_{0}(\widehat{F})$
and $c(\widehat{F})$, and some matrix transformations}

In this section, we determine the alpha-, beta- and gamma-duals of the
spaces $c_{0}(\widehat{F})$ and $c(\widehat{F})$, and characterize the
classes of infinite matrices from the spaces $c_{0}(\widehat{F})$ and $c(%
\widehat{F})$ to the spaces $c_{0}$, $c$, $\ell_{\infty}$, $f$, $f_{0}$, $bs$%
, $fs$, $cs$ and $\ell_{1}$, and from the space $f$ to the spaces $c_{0}(%
\widehat{F})$ and $c(\widehat{F})$.

Now, we quote two lemmas required in proving the theorems concerning the
alpha-, beta- and gamma-duals of the spaces $c_{0}(\widehat{F})$ and $c(%
\widehat{F})$.

\begin{lem}
Let $\lambda$ be any of the spaces $c_{0}$ or $c$ and $a=(a_{n})\in\omega$,
and the matrix $B=(b_{nk})$ be defined by $B_{n}=a_{n}\widehat{F}_{n}^{-1}$,
that is, 
\begin{eqnarray*}
b_{nk}=\left\{%
\begin{array}{ccl}
a_{n}g_{nk} & , & 0\leq k\leq n, \\ 
0 & , & k>n%
\end{array}
\right.
\end{eqnarray*}
for all $k,n\in\mathbb{N}$. Then, $a\in \lambda_{\widehat{F}}^{\beta}$ if
and only if $B\in(\lambda,\ell_{1})$.
\end{lem}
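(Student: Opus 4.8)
The plan is to reduce the condition $a\in\lambda_{\widehat{F}}^{\beta}$ to the membership of a single triangle matrix in a classical matrix class, and then to read off the explicit conditions from Lemma \ref{l21}. By definition, $a=(a_{k})\in\lambda_{\widehat{F}}^{\beta}$ means that the series $\sum_{k}a_{k}x_{k}$ converges for every $x=(x_{k})\in\lambda_{\widehat{F}}$. By the linear isomorphism $\widehat{F}:\lambda_{\widehat{F}}\to\lambda$ of the preceding theorem, every such $x$ is uniquely of the form $x=\widehat{F}^{-1}y$ with $y=(y_{j})\in\lambda$, and the explicit inverse gives $x_{k}=\sum_{j=0}^{k}g_{kj}y_{j}$, where $g_{kj}=f_{k+1}^{2}/(f_{j}f_{j+1})$; crucially, $y$ ranges over all of $\lambda$ as $x$ ranges over $\lambda_{\widehat{F}}$.

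First I would substitute this formula for $x_{k}$ into the $m$-th partial sum and interchange the order of the (finite) double sum, which is legitimate for each fixed $m$:
\begin{eqnarray*}
\sum_{k=0}^{m}a_{k}x_{k}=\sum_{k=0}^{m}a_{k}\sum_{j=0}^{k}g_{kj}y_{j}=\sum_{j=0}^{m}\left(\sum_{k=j}^{m}a_{k}g_{kj}\right)y_{j}=C_{m}(y),
\end{eqnarray*}
where $C=(c_{mj})$ is the triangle matrix with $c_{mj}=\sum_{k=j}^{m}a_{k}g_{kj}$ for $0\leq j\leq m$ and $c_{mj}=0$ for $j>m$. Since $c_{mj}=\sum_{k=j}^{m}b_{kj}$, the matrix $C$ is precisely the array of column partial sums of the matrix $B$ of the statement. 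The whole point of this step is that the sequence of partial sums of the series $\sum_{k}a_{k}x_{k}$ coincides with the sequence $Cy$.

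The characterization then follows from the definition of a matrix class: the series $\sum_{k}a_{k}x_{k}$ converges for every $x\in\lambda_{\widehat{F}}$ if and only if $\lim_{m}C_{m}(y)$ exists for every $y\in\lambda$, that is, if and only if $C\in(\lambda,c)$. To make the dual explicit I would then apply Lemma \ref{l21}: part (b) gives the conditions (\ref{eq21}) and (\ref{eq23}) on $C$ when $\lambda=c_{0}$, while part (d) adds (\ref{eq25}) when $\lambda=c$. Written out for the entries $c_{mj}=(f_{j}f_{j+1})^{-1}\sum_{k=j}^{m}a_{k}f_{k+1}^{2}$, these conditions describe $\lambda_{\widehat{F}}^{\beta}$.

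The main obstacle is to keep both reductions reversible and to extract the correct governing matrix. Reversibility rests on the bijectivity of $x\mapsto\widehat{F}x$ from $\lambda_{\widehat{F}}$ onto $\lambda$, so that quantifying over all $x\in\lambda_{\widehat{F}}$ is equivalent to quantifying over all $y\in\lambda$; this is exactly where the preceding isomorphism theorem is used. The more delicate point is that the beta-dual requirement that $\sum_{k}a_{k}x_{k}$ converge must be encoded as convergence of the partial-sum sequence $(C_{m}(y))_{m}$, i.e. as $C\in(\lambda,c)$, and not as a condition on the individual row limits or on absolute summability. Once the partial-sum matrix $C$ has been correctly isolated from the interchange of summation, the remainder is a direct appeal to Lemma \ref{l21}.
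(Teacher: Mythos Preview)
Your argument is internally correct, but it does not prove the stated equivalence. You conclude that $a\in\lambda_{\widehat{F}}^{\beta}$ if and only if the partial-sum matrix $C=(c_{mj})$ with $c_{mj}=\sum_{k=j}^{m}b_{kj}$ belongs to $(\lambda,c)$. The lemma, however, asserts the equivalence with $B\in(\lambda,\ell_{1})$, and you never arrive at that class: $C\in(\lambda,c)$ and $B\in(\lambda,\ell_{1})$ are genuinely different conditions (convergence of column partial sums versus absolute summability of the rows $B_{n}(y)$), and you give no argument connecting them.

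The discrepancy stems from a misprint in the statement: the superscript should be $\alpha$, not $\beta$. The paper's proof uses the single identity $a_{n}x_{n}=a_{n}\widehat{F}^{-1}_{n}(y)=B_{n}(y)$ and observes that $ax=(a_{n}x_{n})\in\ell_{1}$ for all $x\in\lambda_{\widehat{F}}$ if and only if $By\in\ell_{1}$ for all $y\in\lambda$; this is precisely the $\alpha$-dual characterization, and is how the lemma is applied in Corollary~4.3(a). Your partial-sum/interchange computation is exactly the mechanism of the \emph{next} lemma (Lemma~4.2), which handles the $\beta$- and $\gamma$-duals via $C\in(\lambda,c)$ and $C\in(\lambda,\ell_{\infty})$. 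So what you have written is a correct proof of the $\beta$-dual description $\lambda_{\widehat{F}}^{\beta}=\{a:C\in(\lambda,c)\}$, but not of the lemma as printed; to match the paper you should replace the partial-sum reduction by the termwise identity $a_{n}x_{n}=B_{n}(y)$ and read the conclusion as an $\alpha$-dual statement.
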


\begin{proof}
Let $y$ be the $\widehat{F}$-transform of a sequence $x=(x_{n})\in\omega$.
Then, we have by (\ref{eq33}) that 
\begin{eqnarray}  \label{eq41}
a_{n}x_{n}=a_{n}\widehat{F}_{n}^{-1}(y)=B_{n}(y)~\text{ for all }~n\in%
\mathbb{N}.
\end{eqnarray}
Thus, we observe by (\ref{eq41}) that $ax=(a_{n}x_{n})\in \ell_{1}$ with $%
x\in\lambda_{\widehat{F}}$ if and only if $By\in\ell_{1}$ with $y\in\lambda$%
. This means that $a\in\lambda_{\widehat{F}}^{\beta}$ if and only if $%
B\in(\lambda,\ell_{1})$. This completes the proof.
\end{proof}

\begin{lem}
\emph{\cite[Theorem 3.1]{33}} Let $C=(c_{nk})$ be defined via a sequence $%
a=(a_{k})\in \omega$ and the inverse matrix $V=(v_{nk})$ of the triangle
matrix $U=(u_{nk})$ by 
\begin{eqnarray*}
c_{nk}=\left \{%
\begin{array}{ccl}
\sum_{j=k}^{n}a_{j}v_{jk} & , & 0\leq k\leq n, \\ 
0 & , & k>n%
\end{array}
\right.
\end{eqnarray*}
for all $k,n\in\mathbb{N}$. Then, for any sequence space $\lambda$, 
\begin{eqnarray*}
\lambda_{U}^{\gamma}&=&\left\{a=(a_{k})\in
\omega:C\in(\lambda,\ell_{\infty})\right\}, \\
\lambda_{U}^{\beta}&=&\left\{a=(a_{k})\in \omega:C\in(\lambda,c)\right\}.
\end{eqnarray*}
\end{lem}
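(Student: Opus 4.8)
The plan is to reduce both dual computations to statements about the auxiliary matrix $C$ acting on $\lambda$, exploiting that $U$ is a triangle so that the change of variable $y=Ux$ is a bijection between $\lambda_{U}$ and $\lambda$ whose inverse is $x=Vy$.

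First I would fix $a=(a_{k})\in\omega$ and take an arbitrary $x=(x_{k})\in\lambda_{U}$, putting $y=Ux$; then $y\in\lambda$ and, since $V=U^{-1}$ is again a triangle, $x_{k}=\sum_{j=0}^{k}v_{kj}y_{j}$ for every $k\in\mathbb{N}$, each such sum being finite. The central step is to evaluate the $n$-th partial sum of the series $\sum_{k}a_{k}x_{k}$:
\[
\sum_{k=0}^{n}a_{k}x_{k}=\sum_{k=0}^{n}a_{k}\sum_{j=0}^{k}v_{kj}y_{j}=\sum_{j=0}^{n}\left(\sum_{k=j}^{n}a_{k}v_{kj}\right)y_{j}=\sum_{j=0}^{n}c_{nj}y_{j}=C_{n}(y).
\]
The interchange of the two sums is immediate because it is a finite double sum over the index set $0\leq j\leq k\leq n$, and after relabelling the inner index the coefficient of $y_{j}$ is exactly the entry $c_{nj}$ of the statement. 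Hence the sequence of partial sums of $ax$ coincides, term by term, with the $C$-transform $Cy=\{C_{n}(y)\}$ (note that $C_{n}(y)$ is itself a finite sum, so it is always defined).

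With this identity established, each equality follows by unwinding the definition of the corresponding dual. For the $\gamma$-dual, $a\in\lambda_{U}^{\gamma}$ means $ax\in bs$ for all $x\in\lambda_{U}$, i.e. $\sup_{n}|\sum_{k=0}^{n}a_{k}x_{k}|<\infty$; by the partial-sum identity and the bijection $x\leftrightarrow y$ this says $\sup_{n}|C_{n}(y)|<\infty$ for every $y\in\lambda$, that is $Cy\in\ell_{\infty}$ for all $y\in\lambda$, which is precisely $C\in(\lambda,\ell_{\infty})$. Similarly, $a\in\lambda_{U}^{\beta}$ means $ax\in cs$ for all $x\in\lambda_{U}$, i.e. the partial sums converge; the same identity and bijection turn this into the convergence of $\{C_{n}(y)\}$ for every $y\in\lambda$, i.e. $Cy\in c$ for all $y\in\lambda$, which is $C\in(\lambda,c)$.

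The argument uses no structure on $\lambda$ beyond its being a sequence space, which is why the result holds for arbitrary $\lambda$. The only point demanding attention --- and hence the main (though mild) obstacle --- is to verify that the correspondence $x\leftrightarrow y$ exhausts all of $\lambda_{U}$ and all of $\lambda$, so that the two quantifiers ``for all $x\in\lambda_{U}$'' and ``for all $y\in\lambda$'' really match up; this is guaranteed precisely by $U$ being a triangle, and once it is noted no convergence subtleties remain, since every sum appearing above is finite.
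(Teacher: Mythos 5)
Your argument is correct and coincides with the standard proof of this result: the paper itself states this lemma as a quotation of \cite[Theorem 3.1]{33} (Altay and Ba\c{s}ar) without reproducing a proof, and the proof in that source rests on exactly the partial-sum identity $\sum_{k=0}^{n}a_{k}x_{k}=\sum_{k=0}^{n}c_{nk}y_{k}=C_{n}(y)$ with $y=Ux$, combined with the bijection between $\lambda_{U}$ and $\lambda$ furnished by the triangle $U$. Nothing is missing from your write-up.
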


Combining Lemmas 2.1, 4.1 and 4.2, we have;

\begin{cor}
Consider the sets $d_{1}$, $d_{2}$, $d_{3}$ and $d_{4}$ defined as follows: 
\begin{eqnarray*}
d_{1}&=&\left\{a=(a_{k})\in \omega:\sup_{K\in\mathcal{F}}\sum_{n}\left\vert%
\sum_{k\in K}\frac{f_{n+1}^{2}}{f_{k}f_{k+1}}a_{n}\right\vert<\infty\right\},
\\
d_{2}&=&\left\{a=(a_{k})\in\omega:\sup_{n\in\mathbb{N}}\sum_{k=0}^{n}\left%
\vert\sum\limits_{j=k}^{n}\frac{f_{j+1}^{2}}{f_{k}f_{k+1}}%
a_{j}\right\vert<\infty\right\}, \\
d_{3}&=&\left\{ a=(a_{k})\in \omega:\lim_{n\to\infty}\sum_{j=k}^{n}\frac{%
f_{j+1}^{2}}{f_{k}f_{k+1}}a_{j}\text{ exists for each }k\in\mathbb{N}%
\right\}, \\
d_{4}&=&\left\{a=(a_{k})\in
\omega:\lim_{n\to\infty}\sum_{k=0}^{n}\sum_{j=k}^{n}\frac{f_{j+1}^{2}}{%
f_{k}f_{k+1}}a_{j} \text{ exists}\right\} .
\end{eqnarray*}
Then, the following statements hold:

\begin{enumerate}
\item[(a)] $\big\{c_{0}(\widehat{F})\big\}^{\alpha}=\big\{c(\widehat{F})%
\big\}^{\alpha}=d_{1}$.

\item[(b)] $\big\{c_{0}(\widehat{F})\big\}^{\beta}=d_{2}\cap d_{3}$ and $%
\big\{c(\widehat{F})\big\}^{\beta}=d_{2}\cap d_{3}\cap d_{4}$.

\item[(c)] $\big\{c_{0}(\widehat{F})\big\}^{\gamma}=\big\{c(\widehat{F})%
\big\}^{\gamma}=d_{2}$.
\end{enumerate}
\end{cor}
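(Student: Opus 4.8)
The plan is to determine each dual by translating the defining condition into a matrix transformation condition and then applying the lemmas already assembled. The key structural fact is that the spaces $c_0(\widehat{F})$ and $c(\widehat{F})$ are the matrix domains $(c_0)_{\widehat{F}}$ and $c_{\widehat{F}}$, so their duals can be computed by passing through the inverse matrix $\widehat{F}^{-1}=(g_{nk})$ with $g_{nk}=f_{n+1}^2/(f_kf_{k+1})$ for $0\le k\le n$. The three parts correspond to the three duals, and I would treat them in the order $\alpha$, $\gamma$, $\beta$, since the $\beta$-dual requires the most bookkeeping.

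For part (a), the $\alpha$-dual, I would fix $a=(a_k)$ and note that $a\in\{c_0(\widehat{F})\}^{\alpha}$ means $ax\in\ell_1$ for every $x\in c_0(\widehat{F})$. Writing $x=\widehat{F}^{-1}y$ with $y\in c_0$ via the relation $(\ref{eq33})$, the product $a_nx_n$ becomes $B_n(y)$ where $B=(b_{nk})$ is the matrix $b_{nk}=a_ng_{nk}=a_nf_{n+1}^2/(f_kf_{k+1})$ for $0\le k\le n$. Thus $a$ lies in the $\alpha$-dual exactly when $B\in(c_0,\ell_1)$, and by part (f) of Lemma~\ref{l21} this is equivalent to condition $(\ref{eq26})$ for the matrix $B$, which unwinds precisely to membership in $d_1$. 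Since $(c_0,\ell_1)=(c,\ell_1)$ by the same lemma, the $\alpha$-duals of both spaces coincide, giving the full statement of (a).

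For parts (b) and (c) I would invoke Lemma~4.2 with the triangle $U=\widehat{F}$ and its inverse $V=\widehat{F}^{-1}=(g_{jk})$, so that the associated matrix $C=(c_{nk})$ has entries $c_{nk}=\sum_{j=k}^n a_jg_{jk}=\sum_{j=k}^n a_jf_{j+1}^2/(f_kf_{k+1})$ for $0\le k\le n$. The lemma then gives $\lambda_{\widehat{F}}^{\gamma}=\{a:C\in(\lambda,\ell_{\infty})\}$ and $\lambda_{\widehat{F}}^{\beta}=\{a:C\in(\lambda,c)\}$. For the $\gamma$-dual, part (e) of Lemma~\ref{l21} shows $C\in(c_0,\ell_{\infty})=(c,\ell_{\infty})$ is equivalent to condition $(\ref{eq21})$ for $C$, which is exactly the defining inequality of $d_2$; because the condition is the same for $\lambda=c_0$ and $\lambda=c$, the two $\gamma$-duals agree and equal $d_2$, proving (c). For the $\beta$-dual I would apply parts (b) and (d) of Lemma~\ref{l21}: for $\lambda=c_0$ the requirement $C\in(c_0,c)$ is $(\ref{eq21})$ together with $(\ref{eq23})$, i.e.\ $d_2\cap d_3$, while for $\lambda=c$ the requirement $C\in(c,c)$ adds condition $(\ref{eq25})$, i.e.\ the extra set $d_4$, yielding $d_2\cap d_3\cap d_4$.

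The main obstacle I anticipate is purely bookkeeping rather than conceptual: one must verify carefully that the conditions $(\ref{eq21})$--$(\ref{eq26})$ applied to the matrices $B$ and $C$ translate verbatim into the sets $d_1,\dots,d_4$, in particular that the limit-existence condition $(\ref{eq23})$ applied columnwise to $C$ gives exactly $d_3$ and that $(\ref{eq25})$ gives $d_4$. There is also a minor subtlety worth checking, namely that the series defining $C_n(x)=\sum_k c_{nk}x_k$ indeed represents $\sum_{k=0}^n a_kx_k$ after the change of variables, so that the abstract equivalence in Lemma~4.2 genuinely applies here; but once the entries of $B$ and $C$ are computed correctly this is immediate. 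No delicate convergence argument beyond what the cited lemmas already guarantee should be required.
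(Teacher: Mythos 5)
Your proposal is correct and follows essentially the same route as the paper, which obtains the corollary precisely by combining Lemma 2.1, Lemma 4.1 (the reduction of the $\alpha$-dual condition to $B=(a_ng_{nk})\in(\lambda,\ell_1)$) and Lemma 4.2 (applied with $U=\widehat{F}$, $V=\widehat{F}^{-1}$, giving the matrix $C$ for the $\beta$- and $\gamma$-duals). The only discrepancy is on the paper's side: its Lemma 4.1 is misstated with $\lambda_{\widehat{F}}^{\beta}$ where the condition $ax\in\ell_1$ clearly characterizes the $\alpha$-dual, and you have used it in the intended (correct) way.
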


\begin{thm}
\label{th44} Let $\lambda=c_{0}$ or $c$ and $\mu$ be an arbitrary subset of $%
\omega$. Then, we have $A=(a_{nk})\in(\lambda_{\widehat{F}},\mu)$ if and
only if 
\begin{eqnarray}  \label{eq42}
&&D^{(m)}=\left(d_{nk}^{(m)}\right)\in(\lambda,c)\text{ for all }n\in\mathbb{%
N}, \\
&&D=(d_{nk})\in(\lambda,\mu),  \label{eq43}
\end{eqnarray}
where $d_{nk}^{(m)}=\left\{%
\begin{array}{ccl}
\sum_{j=k}^{m}\frac{f_{j+1}^{2}}{f_{k}f_{k+1}}a_{nj} & , & 0\leq k\leq m, \\ 
0 & , & k>m%
\end{array}%
\right.$ and $d_{nk}=\sum_{j=k}^{\infty}\frac{f_{j+1}^{2}}{f_{k}f_{k+1}}%
a_{nj}$ for all $k,m,n\in\mathbb{N}$.
\end{thm}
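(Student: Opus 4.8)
The plan is to transport the whole question across the linear norm isomorphism $T\colon\lambda_{\widehat{F}}\to\lambda$, $x\mapsto y=\widehat{F}x$, established in Section 3, so that a matrix transformation \emph{out of} $\lambda_{\widehat{F}}$ becomes an ordinary matrix transformation out of $\lambda$. The bridge between the two worlds is the identity $Ax=Dy$, valid for every $x\in\lambda_{\widehat{F}}$ with $y=\widehat{F}x$; once it is available, condition (\ref{eq42}) accounts for the \emph{existence} of $Ax$ and condition (\ref{eq43}) for its \emph{membership} in $\mu$, and the two directions of the equivalence fall out at once.

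First I would record the existence condition. For $Ax$ to be defined for every $x\in\lambda_{\widehat{F}}$ it is necessary and sufficient that each row $A_{n}=(a_{nk})_{k}$ lie in $\big(\lambda_{\widehat{F}}\big)^{\beta}$. Applying Lemma 4.2 with the triangle $U=\widehat{F}$, whose inverse is $\widehat{F}^{-1}=(g_{nk})$ with $g_{nk}=f_{n+1}^{2}/(f_{k}f_{k+1})$, and taking $a=A_{n}$, the matrix $C$ of that lemma has entries $\sum_{j=k}^{m}g_{jk}a_{nj}=d_{nk}^{(m)}$. Hence $A_{n}\in\big(\lambda_{\widehat{F}}\big)^{\beta}$ exactly when $\big(d_{nk}^{(m)}\big)_{m,k}\in(\lambda,c)$, which is precisely (\ref{eq42}). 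Thus (\ref{eq42}) is equivalent to the assertion that $Ax$ exists for all $x\in\lambda_{\widehat{F}}$.

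Next I would produce $Ax=Dy$. Substituting the inverse relation $x_{k}=\sum_{j=0}^{k}g_{kj}y_{j}$ from (\ref{eq33}) into the $m$-th partial sum of $A_{n}(x)$ and reordering the resulting finite double sum gives the key finite identity $\sum_{k=0}^{m}a_{nk}x_{k}=\sum_{k=0}^{m}d_{nk}^{(m)}y_{k}$. Letting $m\to\infty$, the left side tends to $A_{n}(x)$ by the existence just secured, while (\ref{eq42}) guarantees that the right side converges; identifying its limit with $D_{n}(y)=\sum_{k}d_{nk}y_{k}$, via $\lim_{m}d_{nk}^{(m)}=d_{nk}$ and the limit formulas of Lemma 2.1 for $(\lambda,c)$, yields $A_{n}(x)=D_{n}(y)$ for every $n$, i.e. $Ax=Dy$. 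With this in hand the equivalence is immediate: if $A\in(\lambda_{\widehat{F}},\mu)$ then (\ref{eq42}) holds and, as $T$ maps $\lambda_{\widehat{F}}$ onto $\lambda$, the vectors $y=\widehat{F}x$ exhaust $\lambda$, so $Dy=Ax\in\mu$ forces $D\in(\lambda,\mu)$, which is (\ref{eq43}); conversely, (\ref{eq42}) makes $Ax$ exist and equal $Dy$, and (\ref{eq43}) then gives $Ax=Dy\in\mu$ for every $x\in\lambda_{\widehat{F}}$.

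The step I expect to be the main obstacle is the passage to the limit in the finite identity, and specifically the identification of $\lim_{m}\sum_{k}d_{nk}^{(m)}y_{k}$ with $\sum_{k}d_{nk}y_{k}$ in the convergent case $\lambda=c$. Here the $(\,c,c\,)$ limit formula produces, besides $\sum_{k}d_{nk}y_{k}$, a boundary term proportional to $\lim_{m}\sum_{k}d_{nk}^{(m)}$, and one must verify that this quantity coincides with $\sum_{k}d_{nk}$ rather than differing from it as in a shift-type matrix. The point that rescues the argument is that $(f_{k+1}^{2})\in c_{0}(\widehat{F})\subset c(\widehat{F})$, so that $A_{n}$ evaluated on this sequence converges and the offending tail $\sum_{j>K}a_{nj}f_{j+1}^{2}$ tends to zero; this is the special feature of the Fibonacci inverse that makes the boundary contribution vanish and $Ax=Dy$ hold even for $\lambda=c$.
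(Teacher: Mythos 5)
Your proof is correct and follows essentially the same route as the paper's: the finite identity $\sum_{k=0}^{m}a_{nk}x_{k}=D_{n}^{(m)}(y)$ obtained from the inverse relation (\ref{eq33}), the identification of condition (\ref{eq42}) with the row-wise existence of $Ax$ via Lemma 4.2, and the passage $m\to\infty$ to arrive at $Ax=Dy$ and hence at condition (\ref{eq43}). The one place you go beyond the paper is the limit passage for $\lambda=c$, which the paper disposes of with the bare assertion ``we obtain $Ax=Dy$''; your verification that the boundary term $\lim_{m}\sum_{k=0}^{m}d_{nk}^{(m)}$ equals $\sum_{k}d_{nk}$ --- using the convergence of $\sum_{k}1/(f_{k}f_{k+1})$ and the fact that $(f_{k+1}^{2})\in c_{0}(\widehat{F})\subset c(\widehat{F})$ forces the tails $\sum_{j>K}a_{nj}f_{j+1}^{2}$ to vanish --- is correct and closes a step the paper leaves implicit.
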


\begin{proof}
To prove the theorem, we follow the similar way due to Kiri\c{s}\c{c}i and Ba%
\c{s}ar \cite{4}. Let $A=(a_{nk})\in(\lambda_{\widehat{F}},\mu)$ and $%
x=(x_{k})\in\lambda_{\widehat{F}}$. Recalling the relation (\ref{eq33}) we
have 
\begin{eqnarray}  \label{eq44}
\sum_{k=0}^{m}a_{nk}x_{k}&=&\sum_{k=0}^{m}a_{nk}\sum_{j=0}^{k}\frac{%
f_{k+1}^{2}}{f_{j}f_{j+1}}y_{j} \\
&=&\sum_{k=0}^{m}\sum_{j=k}^{m}\frac{f_{j+1}^{2}}{f_{k}f_{k+1}}%
a_{nj}y_{k}=\sum_{k=0}^{m}d_{nk}^{(m)}y_{k}=D_{n}^{(m)}(y)  \notag
\end{eqnarray}
for all $m,n\in\mathbb{N}$. Since $Ax$ exists, $D^{(m)}$ must belong to the
class $(\lambda,c)$. Letting $m\rightarrow \infty$ in the equality (\ref%
{eq44}), we obtain $Ax=Dy$ which gives the result $D\in(\lambda,\mu)$.

Conversely, suppose the conditions (\ref{eq42}), (\ref{eq43}) hold and take
any $x\in\lambda_{\widehat{F}}$. Then, we have $(d_{nk})_{k\in\mathbb{N}%
}\in\lambda^{\beta}$ which gives together with (\ref{eq42}) that $%
A_{n}=(a_{nk})_{k\in\mathbb{N}}\in\lambda_{\widehat{F}}^{\beta}$ for all $%
n\in\mathbb{N}$. Thus, $Ax$ exists. Therefore, we derive by the equality (%
\ref{eq44}) as $m\rightarrow \infty$ that $Ax=Dy,$ and this shows that $%
A\in(\lambda_{\widehat{F}},\mu)$.
\end{proof}

By changing the roles of the spaces $\lambda_{\widehat{F}}$ and $\lambda$
with $\mu$ in Theorem \ref{th44}, we have:

\begin{thm}
\label{th45} Suppose that the elements of the infinite matrices $A=(a_{nk})$
and $B=(b_{nk})$ are connected with the relation 
\begin{eqnarray}  \label{eql45}
b_{nk}:=-\frac{f_{n+1}}{f_{n}}a_{n-1,k}+\frac{f_{n}}{f_{n+1}}a_{nk}
\end{eqnarray}%
for all $k,n\in\mathbb{N}$ and $\mu$ be any given sequence space. Then, $%
A\in(\mu,\lambda_{\widehat{F}})$ if and only if $B\in(\mu,\lambda)$.
\end{thm}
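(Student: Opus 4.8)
The plan is to recognize that the relation (\ref{eql45}) says precisely that $B$ is the matrix product $\widehat{F}A$, so that applying $B$ to an input sequence is the same as first applying $A$ and then taking the $\widehat{F}$-transform of the resulting sequence. Since $\lambda_{\widehat{F}}=\{z\in\omega:\widehat{F}z\in\lambda\}$ by (\ref{eq12}), membership of $Ax$ in $\lambda_{\widehat{F}}$ will turn out to be literally the same statement as membership of $Bx$ in $\lambda$, and the equivalence will drop out once the bookkeeping of convergence is settled.

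Concretely, first I would fix an arbitrary $x=(x_{k})\in\mu$ and assume $A\in(\mu,\lambda_{\widehat{F}})$. Then $Ax$ exists, i.e. each series $A_{n}(x)=\sum_{k}a_{nk}x_{k}$ converges, and $Ax\in\lambda_{\widehat{F}}$, so that $\widehat{F}(Ax)\in\lambda$. Using (\ref{eq32}) I would compute, for $n\geq1$,
\[
\widehat{F}_{n}(Ax)=\frac{f_{n}}{f_{n+1}}A_{n}(x)-\frac{f_{n+1}}{f_{n}}A_{n-1}(x)
=\sum_{k}\left(\frac{f_{n}}{f_{n+1}}a_{nk}-\frac{f_{n+1}}{f_{n}}a_{n-1,k}\right)x_{k}
=\sum_{k}b_{nk}x_{k}=B_{n}(x),
\]
where merging the two convergent series into one is legitimate by linearity, and the case $n=0$ is handled by the conventions $a_{-1,k}=0$ and $f_{0}/f_{1}=1$. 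Thus $Bx=\widehat{F}(Ax)\in\lambda$ for every $x\in\mu$, which is exactly $B\in(\mu,\lambda)$.

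For the converse I would exploit that $\widehat{F}$ is a triangle, with the explicit inverse $\widehat{F}^{-1}=(g_{nk})$ recorded earlier, so that (\ref{eql45}) is equivalent to $A=\widehat{F}^{-1}B$, i.e. $a_{nk}=\sum_{m=0}^{n}g_{nm}b_{mk}$. Assuming $B\in(\mu,\lambda)$, each $B_{m}(x)$ converges, and for fixed $n$ the identity $A_{n}(x)=\sum_{m=0}^{n}g_{nm}B_{m}(x)$ is a \emph{finite} linear combination of convergent series; interchanging the finite $m$-sum with the partial $k$-sums and letting the latter run to infinity then shows that $A_{n}(x)=\sum_{k}a_{nk}x_{k}$ converges as well, so $Ax$ exists. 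By the computation of the first part this gives $\widehat{F}(Ax)=Bx\in\lambda$, whence $Ax\in\lambda_{\widehat{F}}$ and $A\in(\mu,\lambda_{\widehat{F}})$.

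The one genuinely delicate point—and the step I expect to be the main obstacle—is the existence of $Ax$ in this converse direction: knowing only that $Bx$ exists does not a priori guarantee that the rows of $A$ are applicable to $x$. This is exactly where the triangularity of $\widehat{F}$ is essential, since it lets me recover each $A_{n}(x)$ from only finitely many $B_{m}(x)$ and thereby sidestep any convergence issue coming from an infinite $m$-sum. All the series manipulations then reduce to regrouping finitely many terms, so no Fubini-type argument over a doubly infinite array is required and the proof stays elementary.
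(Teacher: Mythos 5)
Your proposal is correct and follows essentially the same route as the paper: both rest on the identity $(Bx)_n=[\widehat{F}(Ax)]_n$ obtained by merging the two rows of $A$ according to (\ref{eql45}). In fact you are more careful than the paper's one-line argument, since you explicitly justify the existence of $Ax$ in the converse direction via $A_n(x)=\sum_{m=0}^{n}g_{nm}B_m(x)$, a point the published proof passes over in silence.
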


\begin{proof}
Let $z=(z_k)\in\mu$. Then, by taking into account the relation (\ref{eql45})
one can easily derive the following equality 
\begin{eqnarray*}
\sum_{k=0}^mb_{nk}z_{k}=\sum_{k=0}^m\left(-\frac{f_{n+1}}{f_{n}}a_{n-1,k}+%
\frac{f_{n}}{f_{n+1}}a_{nk}\right)z_{k}~\text{ for all }~m,n\in \mathbb{N}
\end{eqnarray*}%
which yields as $m\to\infty$ that $(Bz)_n=[\widehat{F}(Az)]_n$. Therefore,
we conclude that $Az\in\lambda_{\widehat{F}}$ whenever $z\in\mu$ if and only
if $Bz\in\lambda$ whenever $z\in\mu$.

This step completes the proof.
\end{proof}

Prior to giving some natural consequences of Theorems \ref{th44} and \ref%
{th45}, we give the concept of \textit{almost convergence}. The \textit{%
shift operator} $P$ is defined on $\omega $ by $P_n(x)=x_{n+1}$ for all $n\in%
\mathbb{N}$. A Banach limit $L$ is defined on $\ell_\infty$, as a
non-negative linear functional, such that $L(Px)=L(x)$ and $L(e)=1$. A
sequence $x=(x_k)\in\ell_\infty$ is said to be almost convergent to the
generalized limit $l$ if all Banach limits of $x$ coincide and are equal to $%
l$ \cite{lor}, and is denoted by $f-\lim x_k=l$. Let us define $t_{mn}(x)$
via a sequence $x=(x_k)$ by 
\begin{eqnarray*}
t_{mn}(x)=\frac{1}{m+1}\sum_{k=0}^mx_{n+k}~\text{ for all }~m,n\in\mathbb{N}.
\end{eqnarray*}
Lorentz \cite{lor} proved that $f-\lim x_k=l$ if and only if $%
\lim_{m\to\infty}t_{mn}(x)=l$, uniformly in $n$. It is well-known that a
convergent sequence is almost convergent such that its ordinary and
generalized limits are equal. By $f_{0}$, $f$ and $fs$, we denote the spaces
of almost null and almost convergent sequences and series, respectively.
Now, we can give the following two lemmas characterizing the strongly and
almost conservative matrices:

\begin{lem}
\label{lem46}\emph{\cite{si}} $A=(a_{nk})\in(f,c)$ if and only if (\ref{eq21}%
), (\ref{eq23}) and (\ref{eq25}) hold, and 
\begin{eqnarray}  \label{eq46}
\lim_{n\to\infty}\sum_{k}\Delta(a_{nk}-\alpha_{k})=0  \label{eq49}
\end{eqnarray}%
also holds, where $\Delta(a_{nk}-\alpha_{k})=a_{nk}-\alpha_{k}-(a_{n,k+1}-%
\alpha_{k+1})$ for all $k,n\in\mathbb{N}$.
\end{lem}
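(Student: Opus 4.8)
The plan is to prove both implications by exploiting the inclusion chain $c\subset f\subset\ell_{\infty}$ and reducing everything to the behaviour of $A$ on the subspace $f_{0}$ of almost-null sequences. First I would record the necessity of (\ref{eq21}), (\ref{eq23}) and (\ref{eq25}): since every convergent sequence is almost convergent to its ordinary limit we have $c\subset f$, so $A\in(f,c)$ forces $A\in(c,c)$, and Part~(d) of Lemma~\ref{l21} delivers exactly these three conditions. Writing $\alpha_{k}=\lim_{n}a_{nk}$ and combining (\ref{eq21}) with Fatou's lemma gives $\sum_{k}|\alpha_{k}|<\infty$, so the series $\sum_{k}\alpha_{k}x_{k}$ converges absolutely for every bounded $x$ and all the manipulations below are legitimate.

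The decisive reduction is the following. Put $b_{nk}=a_{nk}-\alpha_{k}$, so that $\lim_{n}b_{nk}=0$ for each $k$ and $\sup_{n}\sum_{k}|b_{nk}|<\infty$. Given $x\in f$ with $f\text{-}\lim x=l$, write $x=le+w$ with $w\in f_{0}$. Using (\ref{eq25}) and the absolute convergence of $\sum_{k}\alpha_{k}x_{k}$, a short computation gives
\[
A_{n}(x)=l\Big(\alpha-\sum_{k}\alpha_{k}\Big)+\sum_{k}\alpha_{k}x_{k}+\sum_{k}b_{nk}w_{k}+o(1),
\]
so that $A\in(f,c)$ is equivalent to the assertion that $\lim_{n}\sum_{k}b_{nk}w_{k}$ exists for every $w\in f_{0}$. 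Thus both remaining halves of the proof concern the single quantity $\sum_{k}b_{nk}w_{k}$ with $w$ almost null, and the content of (\ref{eq46}) is precisely that this limit always exists and equals $0$. Throughout I read (\ref{eq46}) with the modulus inside the sum, i.e.\ as $\lim_{n}\sum_{k}|\Delta(a_{nk}-\alpha_{k})|=0$; without the modulus the sum telescopes to $a_{n0}-\alpha_{0}$ and would merely repeat (\ref{eq23}).

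For sufficiency I would assume (\ref{eq46}) and estimate $\sum_{k}b_{nk}w_{k}$ by summation by parts, carried out twice so as to bring in the arithmetic means $\frac{1}{m+1}\sum_{i=0}^{m}w_{n+i}$ whose vanishing uniformly in the shift index $n$ is the very definition of $w\in f_{0}$; the resulting coefficients are governed by the first differences $\Delta b_{nk}$, whose total variation tends to $0$ by (\ref{eq46}), and the boundary terms are absorbed using (\ref{eq21}) together with $\lim_{n}b_{nk}=0$. Conversely, for the necessity of (\ref{eq46}) I would argue by contraposition: if $\sum_{k}|\Delta b_{nk}|\not\to0$, then along a suitable subsequence of rows a gliding-hump construction produces a single bounded sequence $w$ whose Ces\`aro means vanish uniformly, so that $w\in f_{0}$, yet for which $\sum_{k}b_{nk}w_{k}$ oscillates, contradicting $A\in(f,c)$.

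The main obstacle is the sufficiency estimate. The difficulty is that the partial sums of an almost-null $w$ need not be bounded, so one cannot sum by parts once and simply invoke boundedness; the whole point of almost convergence is that it is the \emph{uniform} Ces\`aro means, and not the partial sums, that are controlled, and extracting this uniformity while keeping the total variation of $b_{nk}$ as the only handle on the coefficients is exactly where the work concentrates. The gliding-hump half is technically delicate as well, since the hump must be built inside $\ell_{\infty}$ so that its arithmetic means still vanish uniformly; everything else is routine bookkeeping.
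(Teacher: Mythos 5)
The paper does not actually prove this lemma --- it is quoted verbatim from Siddiqi \cite{si} --- so there is no in-paper argument to measure yours against; judged on its own terms, your outline is the standard proof of this classical characterization and its skeleton is correct. In particular you are right that the displayed condition must be read with the modulus inside the sum, $\lim_{n}\sum_{k}|\Delta(a_{nk}-\alpha_{k})|=0$ (as printed it telescopes to $a_{n0}-\alpha_{0}\to0$ and adds nothing to (\ref{eq23})); the necessity of (\ref{eq21}), (\ref{eq23}), (\ref{eq25}) via $c\subset f$ and Lemma \ref{l21}(d), the absolute summability of $(\alpha_{k})$, and the reduction to the single question of whether $\lim_{n}\sum_{k}b_{nk}w_{k}$ exists for every $w\in f_{0}$, where $b_{nk}=a_{nk}-\alpha_{k}$, are all sound.

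Two steps are asserted rather than carried out, and one of them is misdescribed. For sufficiency, ``summation by parts carried out twice'' is not the right mechanism: a double Abel summation anchored at $k=0$ only produces the Ces\`{a}ro means of $w$ at the origin, whereas $w\in f_{0}$ controls the shifted means $t_{mj}(w)$ \emph{uniformly} in $j$. What closes the estimate is smoothing the row rather than the sequence: extend $b_{nk}=0$ for $k<0$, set $\bar{b}^{(m)}_{nj}=\frac{1}{m+1}\sum_{i=0}^{m}b_{n,j-i}$, and check by rearrangement (legitimate since each row is in $\ell_{1}$ and $w$ is bounded) that
\begin{eqnarray*}
\sum_{j}\bar{b}^{(m)}_{nj}w_{j}=\sum_{k}b_{nk}t_{mk}(w)
\quad\text{ and }\quad
\sum_{j}\left\vert b_{nj}-\bar{b}^{(m)}_{nj}\right\vert\leq m\left(|b_{n0}|+\sum_{k}|\Delta b_{nk}|\right),
\end{eqnarray*}
whence $\left\vert\sum_{k}b_{nk}w_{k}\right\vert\leq\sup_{j}|t_{mj}(w)|\cdot\sup_{n}\sum_{k}|b_{nk}|+m\Vert w\Vert_{\infty}\left(|b_{n0}|+\sum_{k}|\Delta b_{nk}|\right)$; choose $m$ first using $w\in f_{0}$, then let $n\to\infty$. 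For necessity, the gliding hump should be prescribed through its partial sums: on the $j$th block put $W_{k}=(-1)^{j}\mathrm{sgn}(b_{n_{j},k}-b_{n_{j},k+1})$, put $W_{k}=0$ between blocks, and let $w_{k}=W_{k}-W_{k-1}$. Then $W$ is bounded, so $|t_{mj}(w)|\leq 2/(m+1)$ and $w\in f_{0}$ automatically --- this settles the worry you raise about building the hump inside $f_{0}$ --- while Abel summation gives $\sum_{k}b_{n_{j},k}w_{k}=\sum_{k}(b_{n_{j},k}-b_{n_{j},k+1})W_{k}\approx(-1)^{j}\sum_{k}|\Delta b_{n_{j},k}|$. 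The alternating factor $(-1)^{j}$ is not cosmetic: without it the transforms could converge to a nonzero limit instead of diverging, and no contradiction with $A\in(f,c)$ would result. With these two repairs your argument is a complete proof of the lemma.
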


\begin{lem}
\label{lem47}\emph{\cite{jpk}} $A=(a_{nk})\in(c,f)$ if and only if (\ref%
{eq21}) holds, and 
\begin{eqnarray}
&&\exists\alpha_{k}\in\mathbb{C}\ni f-\lim~a_{nk}=\alpha_{k}~\text{ for each
fixed } ~k\in\mathbb{N};  \label{eq47} \\
&&\exists\alpha\in\mathbb{C}\ni f-\lim\sum_{k}a_{nk}=\alpha.  \label{eq48}
\end{eqnarray}
\end{lem}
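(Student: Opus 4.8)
The plan is to prove the two implications separately, in both directions routing everything through Lorentz's criterion recalled above: a bounded sequence $u$ satisfies $f-\lim u_k=l$ if and only if $t_{mn}(u)\to l$ as $m\to\infty$ \emph{uniformly} in $n$. For the necessity, suppose $A=(a_{nk})\in(c,f)$. Since $f\subset\ell_{\infty}$ we have $A\in(c,\ell_{\infty})$, so Part (e) of Lemma \ref{l21} immediately yields (\ref{eq21}). Applying $A$ to the unit sequences $e^{(k)}\in c_{0}\subset c$ shows that each column $(a_{nk})_{n\in\mathbb{N}}$ is almost convergent, which is precisely (\ref{eq47}); applying $A$ to $e=(1,1,\ldots)\in c$, whose $A$-transform exists by (\ref{eq21}), shows that the row-sum sequence $\big(\sum_{k}a_{nk}\big)_{n\in\mathbb{N}}$ is almost convergent, which is (\ref{eq48}). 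This settles the ``only if'' part with no real difficulty.

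For the sufficiency I would assume (\ref{eq21}), (\ref{eq47}) and (\ref{eq48}) and set $b_{mn,k}:=\tfrac{1}{m+1}\sum_{j=0}^{m}a_{n+j,k}$, so that for $x=(x_{k})$ one has the clean identity $t_{mn}(Ax)=\tfrac{1}{m+1}\sum_{j=0}^{m}A_{n+j}(x)=\sum_{k}b_{mn,k}x_{k}$. In this language the hypotheses read: $b_{mn,k}\to\alpha_{k}$ and $\sum_{k}b_{mn,k}\to\alpha$ as $m\to\infty$, both uniformly in $n$ (these are exactly (\ref{eq47}) and (\ref{eq48}) via Lorentz's averages), while (\ref{eq21}) gives the uniform bound $\sum_{k}|b_{mn,k}|\leq M:=\sup_{n}\sum_{k}|a_{nk}|$ for all $m,n$. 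Passing to the limit over finite blocks of columns also shows $(\alpha_{k})\in\ell_{1}$ with $\sum_{k}|\alpha_{k}|\leq M$. Now take $x\in c$ with $\lim_{k}x_{k}=\xi$, write $\eta_{k}=x_{k}-\xi\in c_{0}$, and decompose
\[
t_{mn}(Ax)-L=\xi\Big(\sum_{k}b_{mn,k}-\alpha\Big)+\Big(\sum_{k}b_{mn,k}\eta_{k}-\sum_{k}\alpha_{k}\eta_{k}\Big),\qquad L:=\alpha\xi+\sum_{k}\alpha_{k}\eta_{k}.
\]

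From here I would run the classical head/tail estimate, but carried out uniformly in $n$: the first bracket tends to $0$ uniformly in $n$ directly by (\ref{eq48}); for the second bracket I split $\sum_{k}(b_{mn,k}-\alpha_{k})\eta_{k}$ at a large index $N$, bounding the tail by $2M\sup_{k\geq N}|\eta_{k}|$ (using the uniform row-sum bound $M$ together with $(\alpha_{k})\in\ell_{1}$) and bounding the finite head through the uniform-in-$n$ convergence $b_{mn,k}\to\alpha_{k}$ supplied by (\ref{eq47}). Combining these gives $t_{mn}(Ax)\to L$ uniformly in $n$, i.e.\ $Ax\in f$, so $A\in(c,f)$. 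The hard part is precisely this uniformity in $n$: every estimate that in the ordinary Kojima--Schur $(c,c)$ argument (Part (d) of Lemma \ref{l21}) is needed only for fixed $n$ must here hold uniformly across all $n$ simultaneously, and the whole point of phrasing (\ref{eq47}) and (\ref{eq48}) as almost-convergence statements, reinterpreted through the averages $t_{mn}$, is to make exactly these uniform estimates available, while (\ref{eq21}) is what renders the column tail uniformly negligible.
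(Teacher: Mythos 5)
Your proposal is correct, but note that the paper does not actually prove this lemma: it is quoted verbatim from King's paper on almost summable sequences (the reference \cite{jpk}) and used as a black box. What you have written is the standard self-contained argument behind King's theorem. The necessity half (deducing (\ref{eq21}) from $f\subset\ell_{\infty}$ and Part (e) of Lemma \ref{l21}, then testing $A$ on $e^{(k)}$ and on $e$) is routine and correct. The sufficiency half is also sound: the identity $t_{mn}(Ax)=\sum_{k}b_{mn,k}x_{k}$ with $b_{mn,k}=\frac{1}{m+1}\sum_{j=0}^{m}a_{n+j,k}$ is legitimate because the $j$-sum is finite and each row series converges absolutely by (\ref{eq21}); the bound $\sum_{k}|b_{mn,k}|\leq M$ and the resulting $\sum_{k}|\alpha_{k}|\leq M$ are correct; and the head/tail split of $\sum_{k}(b_{mn,k}-\alpha_{k})\eta_{k}$, with the tail controlled by $2M\sup_{k\geq N}|\eta_{k}|$ uniformly in $m,n$ and the finite head controlled by the uniform-in-$n$ column convergence, does deliver $t_{mn}(Ax)\to L$ uniformly in $n$. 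You correctly identify that the only real content beyond the Kojima--Schur argument is carrying every estimate uniformly in $n$, which is exactly what phrasing the hypotheses as almost-convergence statements provides. The single point worth making explicit is that $Ax\in\ell_{\infty}$ (immediate from (\ref{eq21}) and $x\in c\subset\ell_{\infty}$), since almost convergence is only defined for bounded sequences; this is a one-line addition, not a gap.
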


Now, we list the following conditions; 
\begin{eqnarray}
&&\sup_{m\in\mathbb{N}}\sum_{k=0}^{m}\left \vert d_{mk}^{(n)}\right \vert
<\infty  \label{eq45} \\
&&\exists d_{nk}\in\mathbb{C}\ni\lim_{m\to\infty}d_{mk}^{(n)}=d_{nk}\text{
for each }k,n\in\mathbb{N}  \label{eq46} \\
&&\sup_{n\in\mathbb{N}}\sum_{k}\left \vert d_{nk}\right \vert <\infty
\label{eq47} \\
&&\exists\alpha_{k}\in\mathbb{C}\ni\lim_{n\to\infty}d_{nk}=\alpha_{k}\text{
for each }k\in\mathbb{N}  \label{eq48} \\
&&\sup_{N,K\in\mathcal{F}}\left\vert\sum_{n\in N}\sum_{k\in K}d_{nk}\right
\vert <\infty  \label{eq49} \\
&&\exists\beta_{n}\in\mathbb{C}\ni\lim_{m\to\infty}%
\sum_{k=0}^{m}d_{mk}^{(n)}=\beta_{n}\text{ for each }n\in\mathbb{N}
\label{eq410} \\
&&\exists\alpha\in\mathbb{C}\ni\lim_{n\to\infty}\sum_{k}d_{nk}=\alpha
\label{eq411}
\end{eqnarray}

It is trivial that Theorem \ref{th44} and Theorem \ref{th45} have several
consequences. Indeed, combining Theorems \ref{th44}, \ref{th45} and Lemmas %
\ref{l21}, \ref{lem46} and \ref{lem47}, we derive the following results:

\begin{cor}
Let $A=(a_{nk})$ be an infinite matrix and $a(n,k)=\sum_{j=0}^{n}a_{jn}$ for
all $k,n\in\mathbb{N}$. Then, the following statements hold:

\begin{enumerate}
\item[(a)] $A=(a_{nk})\in(c_{0}(\widehat{F}),c_{0})$ if and only if (\ref%
{eq45}), (\ref{eq46}), (\ref{eq47}) hold and (\ref{eq48}) also holds with $%
\alpha_{k}=0$ for all $k\in\mathbb{N}$.

\item[(b)] $A=(a_{nk})\in(c_{0}(\widehat{F}),cs_{0})$ if and only if (\ref%
{eq45}), (\ref{eq46}), (\ref{eq47}) hold and (\ref{eq48}) also holds with $%
\alpha_{k}=0$ for all $k\in\mathbb{N}$ with $a(n,k)$ instead of $a_{nk}$.

\item[(c)] $A=(a_{nk})\in(c_{0}(\widehat{F}),c)$ if and only if (\ref{eq45}%
), (\ref{eq46}), (\ref{eq47}) and (\ref{eq48}) hold.

\item[(d)] $A=(a_{nk})\in(c_{0}(\widehat{F}),cs)$ if and only if (\ref{eq45}%
), (\ref{eq46}), (\ref{eq47}) and (\ref{eq48}) hold with $a(n,k)$ instead of 
$a_{nk}$.

\item[(e)] $A=(a_{nk})\in(c_{0}(\widehat{F}),\ell_{\infty})$ if and only if
conditions (\ref{eq45}), (\ref{eq46}) and (\ref{eq47}) hold.

\item[(f)] $A=(a_{nk})\in(c_{0}(\widehat{F}),bs)$ if and only if conditions (%
\ref{eq45}), (\ref{eq46}) and (\ref{eq47}) hold with $a(n,k)$ instead of $%
a_{nk}$.

\item[(g)] $A=(a_{nk})\in(c_{0}(\widehat{F}),\ell_{1})$ if and only if (\ref%
{eq45}), (\ref{eq46}) and (\ref{eq49}) hold.

\item[(h)] $A=(a_{nk})\in(c_{0}(\widehat{F}),bv_{1})$ if and only if (\ref%
{eq45}), (\ref{eq46}) and (\ref{eq49}) hold with $a_{nk}-a_{n-1,k}$ instead
of $a_{nk}$.
\end{enumerate}
\end{cor}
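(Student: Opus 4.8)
The plan is to obtain all eight parts from the single reduction in Theorem~\ref{th44}, taking $\lambda=c_{0}$ throughout and specialising $\mu$ to the target space in each part. Theorem~\ref{th44} asserts that $A\in(c_{0}(\widehat{F}),\mu)$ if and only if two requirements hold: condition (\ref{eq42}), namely that for every fixed $n$ the triangle $\bigl(d_{mk}^{(n)}\bigr)_{m,k}$ lies in $(c_{0},c)$ (this secures the existence of each transform $A_{n}(x)$), and condition (\ref{eq43}), namely that the limit matrix $D=(d_{nk})$ with $d_{nk}=\sum_{j=k}^{\infty}\tfrac{f_{j+1}^{2}}{f_{k}f_{k+1}}a_{nj}$ lies in $(c_{0},\mu)$. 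The first requirement does not involve $\mu$: applying Lemma~\ref{l21}(b) to $\bigl(d_{mk}^{(n)}\bigr)_{m,k}$ translates membership in $(c_{0},c)$ into the uniform bound (\ref{eq45}) together with the existence of the column limits (\ref{eq46}). Hence conditions (\ref{eq45}) and (\ref{eq46}) are common to every part, and the limits $d_{nk}$ they produce are exactly the entries feeding condition (\ref{eq43}).

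For the coordinate targets (a), (c), (e), (g) it then suffices to expand $D\in(c_{0},\mu)$ through Lemma~\ref{l21}. With $\mu=\ell_{\infty}$, Lemma~\ref{l21}(e) contributes only the row bound (\ref{eq47}), giving~(e). With $\mu=c$, Lemma~\ref{l21}(b) adds to (\ref{eq47}) the existence of the column limits (\ref{eq48}), giving~(c); sharpening to $\mu=c_{0}$, Lemma~\ref{l21}(a) forces those limits to vanish, i.e.\ (\ref{eq48}) with $\alpha_{k}=0$, giving~(a). With $\mu=\ell_{1}$, Lemma~\ref{l21}(f) supplies the finite-subset bound (\ref{eq26}) on $D$, which defines the same matrix class as the symmetric form (\ref{eq49}); this gives~(g). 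In each case the criterion is the conjunction of (\ref{eq45}), (\ref{eq46}) with the conditions just named.

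The remaining parts (b), (d), (f), (h) reduce to these by one observation on the codomain. Writing $S$ for the summation matrix one has $bs=(\ell_{\infty})_{S}$, $cs=c_{S}$ and $cs_{0}=(c_{0})_{S}$, so $Ax$ belongs to $bs$, $cs$ or $cs_{0}$ precisely when the row-cumulated matrix with entries $a(n,k)=\sum_{j=0}^{n}a_{jk}$ sends $c_{0}(\widehat{F})$ into $\ell_{\infty}$, $c$ or $c_{0}$; similarly $bv_{1}=(\ell_{1})_{\Delta}$, so $Ax\in bv_{1}$ precisely when the row-differenced matrix with entries $a_{nk}-a_{n-1,k}$ sends $c_{0}(\widehat{F})$ into $\ell_{1}$. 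Applying part~(e) to the first matrix yields~(f), part~(c) yields~(d), part~(a) yields~(b), and part~(g) applied to the differenced matrix yields~(h); in every case the effect is merely to replace $a_{nk}$ by $a(n,k)$, respectively by $a_{nk}-a_{n-1,k}$, in the conditions already established, which is exactly what is claimed.

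The computations are routine, so the real work is the index bookkeeping inside Theorem~\ref{th44}. One must keep separate the family $\bigl(d_{mk}^{(n)}\bigr)_{m,k}$, whose role is only to guarantee that the $n$-th transform exists, from the single limit matrix $D=(d_{nk})$, whose role is to place $Ax$ in $\mu$, and verify that the column limits provided by (\ref{eq46}) genuinely are the entries of $D$, so that (\ref{eq42}) and (\ref{eq43}) interlock. The secondary subtlety is in part~(g): one should note that the two finite-subset conditions (\ref{eq26}) and (\ref{eq49}), though written differently, characterise the same class $(c_{0},\ell_{1})$, so either may be used.
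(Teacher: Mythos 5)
Your proof is correct and follows essentially the same route as the paper, which obtains this corollary by specialising Theorem \ref{th44} with $\lambda=c_{0}$, reading off the conditions on the matrices $\bigl(d_{mk}^{(n)}\bigr)$ and $D=(d_{nk})$ from Lemma \ref{l21}, and handling the series-type and $bv_{1}$ codomains via the summation and difference matrices. The only point worth noting is that you silently use the intended definition $a(n,k)=\sum_{j=0}^{n}a_{jk}$ in place of the paper's misprinted $\sum_{j=0}^{n}a_{jn}$, which is indeed what the argument requires.
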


\begin{cor}
Let $A=(a_{nk})$ be an infinite matrix. Then, the following statements hold:

\begin{enumerate}
\item[(a)] $A=(a_{nk})\in(c(\widehat{F}),\ell_{\infty})$ if and only if (\ref%
{eq45}), (\ref{eq46}), (\ref{eq47}) and (\ref{eq410}) hold.

\item[(b)] $A=(a_{nk})\in(c(\widehat{F}),bs)$ if and only if (\ref{eq45}), (%
\ref{eq46}), (\ref{eq47}) and (\ref{eq410}) hold with $a(n,k)$ instead of $%
a_{nk}$.

\item[(c)] $A=(a_{nk})\in(c(\widehat{F}),c)$ if and only if (\ref{eq45}), (%
\ref{eq46}), (\ref{eq47}), (\ref{eq48}), (\ref{eq410}) and (\ref{eq411})
hold.

\item[(d)] $A=(a_{nk})\in(c(\widehat{F}),cs)$ if and only if (\ref{eq45}), (%
\ref{eq46}), (\ref{eq47}), (\ref{eq48}), (\ref{eq410}) and (\ref{eq411})
hold with $a(n,k)$ instead of $a_{nk}$.

\item[(e)] $A=(a_{nk})\in(c(\widehat{F}),c_{0})$ if and only if (\ref{eq45}%
), (\ref{eq46}), (\ref{eq47}), (\ref{eq48}) hold with $\alpha_{k}=0$ for all 
$k\in\mathbb{N}$, (\ref{eq410}) and (\ref{eq411}) also hold with $\alpha=0$.

\item[(f)] $A=(a_{nk})\in(c(\widehat{F}),cs_{0})$ if and only if (\ref{eq45}%
), (\ref{eq46}), (\ref{eq47}), (\ref{eq48}) hold with $\alpha_{k}=0$ for all 
$k\in\mathbb{N}$, (\ref{eq410}) and (\ref{eq411}) also hold with $\alpha=0$
with $a(n,k)$ instead of $a_{nk}$.

\item[(g)] $A=(a_{nk})\in(c(\widehat{F}),\ell_{1})$ if and only if (\ref%
{eq45}), (\ref{eq46}), (\ref{eq49}) and (\ref{eq410}) hold.

\item[(h)] $A=(a_{nk})\in(c(\widehat{F}),bv_{1})$ if and only if (\ref{eq45}%
), (\ref{eq46}), (\ref{eq49}) and (\ref{eq410}) hold with $a_{nk}-a_{n-1,k}$
instead of $a_{nk}$.
\end{enumerate}
\end{cor}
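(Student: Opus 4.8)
The plan is to obtain every part of this corollary from Theorem~\ref{th44} specialised to $\lambda=c$ (legitimate because $c(\widehat{F})=c_{\widehat{F}}$), and then to translate the two resulting requirements through the Stieglitz--Tietz characterisations collected in Lemma~\ref{l21}. By Theorem~\ref{th44}, for an arbitrary $\mu$ one has $A\in(c(\widehat{F}),\mu)$ if and only if, first, for each fixed $n$ the triangle $D^{(n)}=(d^{(n)}_{mk})_{m,k}$ furnished by that theorem belongs to $(c,c)$, and second, $D=(d_{nk})\in(c,\mu)$. Thus the whole corollary is produced by expanding the first requirement once and the second requirement separately for each target space.

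First I would dispose of the common requirement $D^{(n)}\in(c,c)$. Reading Lemma~\ref{l21}(d) for the triangle $D^{(n)}$, whose rows are indexed by $m$ and columns by $k$, membership in $(c,c)$ is equivalent to conditions (\ref{eq21}), (\ref{eq23}) and (\ref{eq25}) of that lemma; written out for the entries $d^{(n)}_{mk}$ and demanded for every $n$, these are exactly (\ref{eq45}), (\ref{eq46}) and (\ref{eq410}). This accounts for the appearance of (\ref{eq45}), (\ref{eq46}) and (\ref{eq410}) in all eight parts.

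Next I would expand the requirement $D\in(c,\mu)$ case by case, again via Lemma~\ref{l21}. For $\mu=\ell_\infty$, part~(e) yields precisely (\ref{eq47}), giving part~(a); for $\mu=c$, part~(d) yields (\ref{eq47}), (\ref{eq48}) and (\ref{eq411}), giving part~(c); for $\mu=c_0$, part~(c) yields (\ref{eq47}) together with (\ref{eq48}) forced to have $\alpha_k=0$ and (\ref{eq411}) forced to have $\alpha=0$, giving part~(e); and for $\mu=\ell_1$, part~(f) yields (\ref{eq49}), giving part~(g). The four remaining parts are the ``summed'' and ``differenced'' versions: $Ax\in bs$ (resp.\ $cs$, $cs_0$) holds if and only if the partial-sum matrix with entries $a(n,k)$ sends $x$ into $\ell_\infty$ (resp.\ $c$, $c_0$), while $Ax\in bv_1$ holds if and only if the matrix with entries $a_{nk}-a_{n-1,k}$ sends $x$ into $\ell_1$. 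Hence parts (b), (d), (f), (h) follow from (a), (c), (e), (g) by carrying out inside $d_{nk}$ the substitution indicated in their statements.

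The work here is essentially bookkeeping, so the only care-points are index management. I must keep the two index roles straight: the first requirement is a family of statements, one $(c,c)$-membership of $D^{(n)}$ for each fixed $n$, so that (\ref{eq45}), (\ref{eq46}) and (\ref{eq410}) are genuinely required ``for each $n$'' in the pointwise sense in which they are stated, whereas the second requirement is a single $(c,\mu)$-membership of $D$ producing the conditions quantified over $n$. I should also confirm that (\ref{eq410}), the existence of the limit of the row-sums of $D^{(n)}$, is truly the image of (\ref{eq25}) and is not already implied by (\ref{eq45})--(\ref{eq46}); and, for the series and $bv_1$ reductions, that passing to partial sums or to forward differences commutes with the summation defining $d_{nk}$, so that the substitution may legitimately be performed at the level of $a_{nk}$ as the statements assert.
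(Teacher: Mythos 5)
Your proposal is correct and follows exactly the route the paper intends: specialise Theorem \ref{th44} to $\lambda=c$, unpack the condition $D^{(n)}\in(c,c)$ via Lemma \ref{l21}(d) into (\ref{eq45}), (\ref{eq46}), (\ref{eq410}), unpack $D\in(c,\mu)$ via the appropriate part of Lemma \ref{l21}, and reduce the $bs$, $cs$, $cs_{0}$, $bv_{1}$ cases to the $\ell_{\infty}$, $c$, $c_{0}$, $\ell_{1}$ cases by the partial-sum and difference substitutions. The paper states the corollary without proof as an immediate consequence of these same ingredients, so your write-up is a faithful (and more explicit) version of the intended argument.
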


\begin{cor}
$A=(a_{nk})\in(c(\widehat{F}),f)$ if and only if (\ref{eq45}), (\ref{eq46}),
(\ref{eq410}) and (\ref{eq411}) hold, and (\ref{eq47}), (\ref{eq48}) also
hold with $d_{nk}$ instead of $a_{nk}$.
\end{cor}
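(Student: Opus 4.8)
The plan is to specialise Theorem \ref{th44} to the case $\lambda=c$ and $\mu=f$, which is legitimate because $c(\widehat{F})=c_{\widehat{F}}=\lambda_{\widehat{F}}$ with $\lambda=c$. By that theorem, $A=(a_{nk})\in(c(\widehat{F}),f)$ holds if and only if two requirements are met simultaneously: condition (\ref{eq42}), namely that for each fixed $n$ the truncated matrix $D^{(m)}=(d_{nk}^{(m)})$ lies in $(c,c)$; and condition (\ref{eq43}), namely that the matrix $D=(d_{nk})$ with $d_{nk}=\sum_{j=k}^{\infty}\frac{f_{j+1}^{2}}{f_{k}f_{k+1}}a_{nj}$ lies in $(c,f)$. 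The whole task therefore reduces to re-expressing these two membership statements through the classical characterisations already quoted in the excerpt.

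For the first requirement I would read $D^{(m)}$, for each fixed $n$, as a matrix in the variables $m$ (row) and $k$ (column), with entries $d_{mk}^{(n)}$, and apply part (d) of Lemma \ref{l21}, which characterises $(c,c)$ by conditions (\ref{eq21}), (\ref{eq23}) and (\ref{eq25}). Transcribing these three conditions for $(d_{mk}^{(n)})_{m,k}$ and demanding them for every $n$ yields exactly the uniform bound (\ref{eq45}), the column limits (\ref{eq46}) that furnish the numbers $d_{nk}$, and the row-sum limits (\ref{eq410}). This is internally consistent with the dual computation of the earlier corollary: (\ref{eq45}), (\ref{eq46}) and (\ref{eq410}) together say precisely that every row $A_{n}$ lies in $\{c(\widehat{F})\}^{\beta}=d_{2}\cap d_{3}\cap d_{4}$, so that $Ax$ is well defined and the passage $m\to\infty$ in the identity (\ref{eq44}) is legitimate.

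For the second requirement I would apply Lemma \ref{lem47}, the characterisation of $(c,f)$, to the matrix $D=(d_{nk})$ in place of $A$. This produces the uniform bound $\sup_{n}\sum_{k}|d_{nk}|<\infty$ (the instance of (\ref{eq21}) for $D$) together with the two almost-convergence requirements $f-\lim_{n}d_{nk}=\alpha_{k}$ for each fixed $k$ and $f-\lim_{n}\sum_{k}d_{nk}=\alpha$; these last two are exactly conditions (\ref{eq47}) and (\ref{eq48}) of Lemma \ref{lem47} read with $d_{nk}$ in the role of $a_{nk}$. Collecting the conditions arising from the two requirements then gives the asserted characterisation.

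The only genuinely delicate point is the index bookkeeping around the three-index symbol $d_{mk}^{(n)}$, which plays two distinct roles: for fixed $n$ it is the $(m,k)$-entry of a matrix that must lie in $(c,c)$, whereas its limit as $m\to\infty$ supplies the entries $d_{nk}$ of the matrix $D$ governing the second requirement. Consequently the first requirement is logically prior to the second—it guarantees $A_{n}\in\{c(\widehat{F})\}^{\beta}$ and hence the existence of $Ax=Dy$ needed even to formulate (\ref{eq43}). Apart from this ordering there is no analytic obstacle: the argument is a direct two-step translation through Lemma \ref{l21}(d) and Lemma \ref{lem47}, and the main care required is simply to match each classical condition to its renamed counterpart in the displayed list.
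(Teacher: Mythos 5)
Your two-step reduction---Theorem \ref{th44} with $\lambda=c$ and $\mu=f$, then Lemma \ref{l21}(d) to unpack $D^{(m)}\in(c,c)$ into (\ref{eq45}), (\ref{eq46}), (\ref{eq410}), and Lemma \ref{lem47} to unpack $D\in(c,f)$---is exactly how the paper obtains this corollary (it offers no separate proof), and your index bookkeeping, including the observation that the first block of conditions amounts to $A_n\in\{c(\widehat{F})\}^{\beta}$ for every $n$, is correct. The only mismatch is on the paper's side rather than yours: the labels for the two $f$-limit conditions of Lemma \ref{lem47} collide with later displayed conditions in the source, and the literal list in the corollary cites (\ref{eq411}) (ordinary convergence of $\sum_k d_{nk}$) where your derivation correctly produces $\sup_{n}\sum_{k}|d_{nk}|<\infty$ together with the two $f$-limit conditions applied to $D$.
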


\begin{cor}
$A=(a_{nk})\in(c(\widehat{F}),f_{0})$ if and only if (\ref{eq45}), (\ref%
{eq46}), (\ref{eq410}) and (\ref{eq411}) hold, and (\ref{eq47}), (\ref{eq48}%
) also hold with $d_{nk}$ instead of $a_{nk}$ and $\alpha_{k}=0$ for all $%
k\in\mathbb{N}$.
\end{cor}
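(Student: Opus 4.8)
The plan is to obtain this statement as the special case $\lambda=c$, $\mu=f_{0}$ of Theorem~\ref{th44}, in exactly the manner the surrounding corollaries are produced. By Theorem~\ref{th44}, a matrix $A=(a_{nk})$ lies in $(c(\widehat{F}),f_{0})=(c_{\widehat{F}},f_{0})$ if and only if two things hold at once: for every fixed $n\in\mathbb{N}$ the triangle $D^{(n)}=(d_{mk}^{(n)})_{m,k}$ belongs to $(c,c)$, which is what makes each row $A_{n}$ an element of $\{c(\widehat{F})\}^{\beta}$ and hence $Ax$ well defined; and the limit matrix $D=(d_{nk})$, obtained from the finite identity~(\ref{eq44}) by letting $m\to\infty$, satisfies $D\in(c,f_{0})$. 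So the entire task is to rewrite these two membership statements as the scalar conditions listed before the corollary.

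First I would translate ``$D^{(n)}\in(c,c)$ for every $n$'' using Part~(d) of Lemma~\ref{l21}, which characterises $(c,c)$ by~(\ref{eq21}),~(\ref{eq23}) and~(\ref{eq25}). Reading these three requirements off for the triangle $D^{(n)}$, whose rows are indexed by $m$ and columns by $k$, yields precisely the uniform boundedness~(\ref{eq45}), the columnwise convergence~(\ref{eq46}) that actually defines the entries $d_{nk}$, and the convergence of the truncated row sums~(\ref{eq410}). Thus~(\ref{eq45}),~(\ref{eq46}) and~(\ref{eq410}) are together equivalent to the first half of Theorem~\ref{th44}.

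Next I would handle ``$D\in(c,f_{0})$''. Lemma~\ref{lem47} describes the almost conservative matrices $(c,f)$ by the boundedness condition together with the existence of the generalized column limits $f-\lim_{n}d_{nk}=\alpha_{k}$ and of the generalized row-sum limit $f-\lim_{n}\sum_{k}d_{nk}=\alpha$; applied with $D$ in place of $A$ these are the conditions~(\ref{eq47}) and~(\ref{eq48}) read with $d_{nk}$ instead of $a_{nk}$, supplemented by~(\ref{eq411}). To descend from $f$ to $f_{0}$ I would argue that $D\in(c,f_{0})$ means $f-\lim_{n}(Dy)_{n}=0$ for every $y\in c$; testing this against $y=e^{(k)}$ forces each $\alpha_{k}=0$, and testing against $y=e$ forces $\alpha=0$, while conversely, once all these generalized limits vanish the representation of the $f$-limit of $Dy$ shows it is identically zero on $c$. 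This produces exactly the extra requirement ``$\alpha_{k}=0$ for all $k$'' appended in the statement (and, as the $y=e$ test shows, one should strictly speaking also record $\alpha=0$, in parallel with the $c_{0}$ case handled earlier in this section).

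I expect the only genuine difficulty to be the limit interchange concealed inside Theorem~\ref{th44}: one must be sure that passing to $m\to\infty$ in~(\ref{eq44}) is legitimate, so that $Ax=Dy$ and the equivalence $A\in(c(\widehat{F}),f_{0})\Leftrightarrow D\in(c,f_{0})$ really holds; this legitimacy is precisely what the first family of conditions secures through $D^{(n)}\in(c,c)$. The remaining subtlety is bookkeeping of two different kinds of limits, namely the ordinary limits controlling the $\beta$-dual and definedness part and the Banach (almost) limits controlling the target $f_{0}$, and checking that forcing the almost-limits of $D$ to be zero is both necessary and sufficient for the image to land in $f_{0}$ rather than merely in $f$. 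Everything past this point is the routine verification already carried out for Theorems~\ref{th44} and~\ref{th45}.
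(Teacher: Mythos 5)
Your proposal is correct and follows exactly the route the paper intends: the corollary is obtained from Theorem \ref{th44} with $\lambda=c$ and $\mu=f_{0}$, using Lemma \ref{l21}(d) to unpack $D^{(n)}\in(c,c)$ and Lemma \ref{lem47} (reduced to $f_{0}$ by forcing the generalized limits to vanish) to unpack $D\in(c,f_{0})$; the paper offers no written proof beyond the instruction to combine these results, and your write-up is precisely that combination. Your parenthetical remark that the statement should also record $\alpha=0$ (from testing against $e$), in parallel with the $(c(\widehat{F}),c_{0})$ case, is an accurate observation about an omission in the corollary as printed rather than a defect in your argument.
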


\begin{cor}
$A=(a_{nk})\in(c(\widehat{F}),fs)$ if and only if (\ref{eq45}), (\ref{eq46}%
), (\ref{eq47}),(\ref{eq48}), (\ref{eq410}) and (\ref{eq411}) hold with $%
a(n,k)$ instead of $a_{nk}$ and (\ref{eq47}), (\ref{eq48}) hold with $d(n,k)$
instead of $d_{nk}$.
\end{cor}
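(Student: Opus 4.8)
The plan is to peel the target space $fs$ down to the space $f$ of almost convergent sequences by means of the summation (partial-sum) matrix, and then to feed the outcome into the characterization of the class $(c(\widehat{F}),f)$ established in the preceding corollary. The guiding observation is that a series $\sum_{k}u_{k}$ lies in $fs$ exactly when its partial-sum sequence $\big(\sum_{j=0}^{n}u_{j}\big)_{n\in\mathbb{N}}$ lies in $f$. Writing $a(n,k)=\sum_{j=0}^{n}a_{jk}$ and $\widetilde{A}=(a(n,k))$, I expect to reduce the whole problem to the single equivalence
\begin{eqnarray*}
A\in(c(\widehat{F}),fs)\quad\Longleftrightarrow\quad\widetilde{A}\in(c(\widehat{F}),f),
\end{eqnarray*}
and this is precisely what accounts for the clause ``with $a(n,k)$ instead of $a_{nk}$'' throughout the statement: every condition is to be read off the partial-sum matrix $\widetilde{A}$ rather than off $A$ itself.

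First I would establish this equivalence and check that all series in sight converge. For $x=(x_{k})\in c(\widehat{F})$, convergence of each row series $A_{n}(x)=\sum_{k}a_{nk}x_{k}$ amounts to $A_{n}\in\{c(\widehat{F})\}^{\beta}$; in the necessity direction this is guaranteed by $A\in(c(\widehat{F}),fs)$, while in the sufficiency direction it follows from the convergence of $\widetilde{A}_{n}(x)$ and $\widetilde{A}_{n-1}(x)$ through the relation $a_{nk}=a(n,k)-a(n-1,k)$. Granting this, the finite sum $\sum_{j=0}^{n}$ may be interchanged with the convergent series $\sum_{k}$ to give $\sum_{j=0}^{n}A_{j}(x)=\sum_{k}a(n,k)x_{k}=\widetilde{A}_{n}(x)$, so the partial sums of $Ax$ are exactly the entries of $\widetilde{A}x$; hence $Ax\in fs$ if and only if $\widetilde{A}x\in f$, and as $x$ ranges over $c(\widehat{F})$ the equivalence follows.

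Next I would simply apply the previously proved characterization of $(c(\widehat{F}),f)$ to the matrix $\widetilde{A}$. Unwinding that result through Theorem \ref{th44} with $\lambda=c$ and $\mu=f$, the requirement $\widetilde{A}\in(c(\widehat{F}),f)$ splits into the demand that the truncations associated to $a(n,k)$ lie in $(c,c)$ for each fixed $n$ and the demand that the limit matrix $\widetilde{D}=(d(n,k))$, with $d(n,k)=\sum_{j=k}^{\infty}\frac{f_{j+1}^{2}}{f_{k}f_{k+1}}a(n,j)=\sum_{j=0}^{n}d_{jk}$, lie in $(c,f)$. Lemma \ref{l21}(d) transcribes the first demand into conditions (\ref{eq45}), (\ref{eq46}) and (\ref{eq410}) read for $a(n,k)$, while Lemma \ref{lem47} transcribes the second into the uniform boundedness of $\widetilde{D}$ together with the two almost-convergence requirements $f-\lim_{n}d(n,k)=\alpha_{k}$ and $f-\lim_{n}\sum_{k}d(n,k)=\alpha$; these last are the conditions listed for $d(n,k)$ in the statement. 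Collecting everything reproduces the asserted list.

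I do not expect a single hard estimate here: the scheme is identical to that of the earlier corollaries, and the difficulty is purely one of bookkeeping. The point demanding care is that two independent reductions are superimposed, namely the Fibonacci-domain reduction $c(\widehat{F})\to c$ supplied by Theorem \ref{th44} and the series reduction $fs\to f$ supplied by the summation matrix. One must be sure that the almost-convergence furnished by Lemma \ref{lem47} is attached to the fully transformed matrix $d(n,k)$ and not to $a(n,k)$ or to the original $d_{nk}$, and that the interchanges of summation used above are licensed by the finiteness of the $\beta$-dual of $c(\widehat{F})$ determined earlier; this finiteness is exactly what forces the truncation conditions (\ref{eq45}) and (\ref{eq46}) to be imposed for each $n$.
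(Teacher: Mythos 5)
Your proposal is correct and follows essentially the same route the paper intends: the corollary is stated there without proof as a direct combination of Theorem \ref{th44} (with $\lambda=c$, $\mu=fs$) and Lemmas \ref{l21} and \ref{lem47}, the passage from $fs$ to $f$ being effected precisely by replacing $A$ with its partial-sum matrix $\big(a(n,k)\big)$ and $D$ with $\big(d(n,k)\big)$. Your explicit checks that the partial sums of $Ax$ coincide with $\widetilde{A}x$ and that $d(n,k)=\sum_{j=0}^{n}d_{jk}$ merely supply the bookkeeping the paper leaves implicit.
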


\begin{cor}
$A=(a_{nk})\in(f,c(\widehat{F}))$ if and only if (\ref{eq21}), (\ref{eq23}),
(\ref{eq25}) and (\ref{eq46}) hold with $b_{nk}$ instead of $a_{nk}$, where $%
b_{nk}$ is defined by (\ref{eql45}).
\end{cor}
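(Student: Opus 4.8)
The plan is to reduce the assertion to a classical matrix-class characterization via the transfer theorem already proved. First I would invoke Theorem \ref{th45} with the choices $\mu=f$ and $\lambda=c$, so that $\lambda_{\widehat{F}}=c(\widehat{F})$. That theorem, specialized in this way, states that $A=(a_{nk})\in(f,c(\widehat{F}))$ if and only if $B=(b_{nk})\in(f,c)$, where $B$ is the matrix obtained from $A$ through the relation (\ref{eql45}), namely $b_{nk}=-\frac{f_{n+1}}{f_{n}}a_{n-1,k}+\frac{f_{n}}{f_{n+1}}a_{nk}$. The content of this step is exactly the identity $(Bz)_n=[\widehat{F}(Az)]_n$ established in the proof of Theorem \ref{th45}, which shows $Az\in c(\widehat{F})$ (i.e. $\widehat{F}(Az)\in c$) precisely when $Bz\in c$. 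This transfers the whole problem from the non-absolute-type space $c(\widehat{F})$ back to the classical space $c$.

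Next I would apply Lemma \ref{lem46}, which characterizes the almost-conservative matrices of the class $(f,c)$: a matrix belongs to $(f,c)$ exactly when the uniform row-norm bound (\ref{eq21}), the columnwise limit condition (\ref{eq23}), the row-sum limit condition (\ref{eq25}), and the almost-convergence condition (\ref{eq46}) all hold. Reading this lemma off for the matrix $B$ rather than $A$ produces precisely the four conditions listed in the statement, each expressed with $b_{nk}$ in place of $a_{nk}$. Composing the two equivalences — Theorem \ref{th45} followed by Lemma \ref{lem46} — yields the corollary.

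The argument is therefore essentially bookkeeping, and I do not expect a genuine analytic obstacle, since both ingredients are already available and the corollary follows the same template as the preceding consequences of Theorems \ref{th44} and \ref{th45}. The one point that deserves care is confirming that Lemma \ref{lem46}, when transcribed for $B$, gives \emph{exactly} the stated conditions: in particular that the limit scalars $\alpha_k$ and $\alpha$ entering (\ref{eq23}) and (\ref{eq25}), and the difference expression appearing in (\ref{eq46}), are all built from the entries $b_{nk}$ and not from $a_{nk}$. For completeness I would also record the convention $a_{-1,k}=0$ implicit in (\ref{eql45}), which makes $B$ well defined in its first row. Beyond these substitutions no separate estimate or construction is required.
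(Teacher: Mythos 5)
Your proposal is correct and follows exactly the route the paper intends: specialize Theorem \ref{th45} with $\mu=f$ and $\lambda=c$ to reduce $A\in(f,c(\widehat{F}))$ to $B\in(f,c)$, then read off Siddiqi's characterization of $(f,c)$ from Lemma \ref{lem46} for the matrix $B$. Your side remarks (the convention $a_{-1,k}=0$ in the first row of $B$, and checking that the scalars $\alpha_k$, $\alpha$ and the difference expression are formed from the $b_{nk}$) are exactly the right points of care, and no further argument is needed.
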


\begin{cor}
$A=(a_{nk})\in(f,c_{0}(\widehat{F}))$ if and only if (\ref{eq21}) and (\ref%
{eq25}) hold, (\ref{eq23}) and (\ref{eq46}) also hold with $b_{nk}$ instead
of $a_{nk}$ and $\alpha_{k}=0$ for all $k\in\mathbb{N}$, where $b_{nk}$ is
defined by (\ref{eql45}).
\end{cor}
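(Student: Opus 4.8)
The plan is to peel off the Fibonacci matrix using Theorem \ref{th45} and then reduce to a classical characterization of the class $(f,c_{0})$, exactly along the lines of the preceding corollary for the class $(f,c(\widehat{F}))$.

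First I would invoke Theorem \ref{th45} with $\mu=f$ and $\lambda=c_{0}$, so that $\lambda_{\widehat{F}}=c_{0}(\widehat{F})$. Writing $B=(b_{nk})$ for the matrix tied to $A=(a_{nk})$ by the relation (\ref{eql45}), the theorem gives at once that $A\in(f,c_{0}(\widehat{F}))$ if and only if $B\in(f,c_{0})$. This trades the difference space $c_{0}(\widehat{F})$ for the ordinary space $c_{0}$, and is precisely why every condition in the conclusion is to be read with $b_{nk}$ in place of $a_{nk}$.

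The remaining and only substantive step is to characterize $(f,c_{0})$. Since $c_{0}\subset c$, a matrix $B$ lies in $(f,c_{0})$ exactly when $B\in(f,c)$ and its limit functional $z\mapsto\lim_{n}B_{n}(z)$ vanishes identically on $f$. By Lemma \ref{lem46}, $B\in(f,c)$ is equivalent to (\ref{eq21}), (\ref{eq23}), (\ref{eq25}) and the almost-convergence condition (\ref{eq46}), all read for $B$, with associated scalars $\alpha_{k}=\lim_{n}b_{nk}$ and $\alpha=\lim_{n}\sum_{k}b_{nk}$. For such a $B$ one has, for every $z\in f$ with $f-\lim z=l$, the representation $\lim_{n}B_{n}(z)=\sum_{k}\alpha_{k}z_{k}+(\alpha-\sum_{k}\alpha_{k})l$, the series converging absolutely because (\ref{eq21}) forces $\sum_{k}|\alpha_{k}|<\infty$ and $f\subset\ell_{\infty}$. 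Evaluating this functional on $z=e^{(k)}$ yields $\alpha_{k}=0$ for each $k$, and then evaluating on $z=e$ yields $\alpha=0$; conversely, when $\alpha_{k}=0$ for all $k$ and $\alpha=0$ the displayed functional is identically zero, so $B\in(f,c_{0})$.

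Combining the two steps and substituting the entries of $B$ throughout gives the stated equivalence, the scalars being forced to satisfy $\alpha_{k}=0$ for all $k$ in (\ref{eq23}) and $\alpha=0$ in (\ref{eq25}). I expect the only delicate point to be the justification of the limit-functional formula for $B\in(f,c)$ and the verification that its vanishing on all of $f$ is equivalent to $\alpha_{k}=0$ for every $k$ together with $\alpha=0$; everything else is a direct composition of Theorem \ref{th45} with Lemma \ref{lem46}, completely parallel to the derivation already used for $(f,c(\widehat{F}))$.
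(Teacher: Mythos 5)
Your derivation is the intended one and it is carried out correctly: Theorem \ref{th45} with $\lambda=c_{0}$ and $\mu=f$ converts the problem into characterizing $B\in(f,c_{0})$, and Lemma \ref{lem46} together with the limit formula $\lim_{n}B_{n}(z)=\sum_{k}\alpha_{k}z_{k}+\big(\alpha-\sum_{k}\alpha_{k}\big)\,f\text{-}\lim z$ does the rest; your justification of that formula and of the equivalence ``the limit functional vanishes on $f$ if and only if $\alpha_{k}=0$ for all $k$ and $\alpha=0$'' (testing on $e^{(k)}$ and on $e$) is sound, the absolute convergence $\sum_{k}|\alpha_{k}|<\infty$ being guaranteed by (\ref{eq21}). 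The paper offers no written proof of this corollary, declaring it a consequence of Theorem \ref{th45} and Lemma \ref{lem46}, so yours is essentially the paper's route. The one point to flag is that your (correct) argument produces a condition the printed statement does not contain: you need (\ref{eq25}) to hold \emph{with} $\alpha=0$, not merely to hold. This is not a defect of your proof but an omission in the corollary as printed, and it is a genuine one: take $B=C_{1}$, the Ces\`{a}ro matrix of order one (equivalently $A=\widehat{F}^{-1}C_{1}$); then (\ref{eq21}), (\ref{eq23}) with $\alpha_{k}=0$, (\ref{eq25}) with $\alpha=1$ and (\ref{eq46}) all hold, yet $\lim_{n}(C_{1}z)_{n}=f\text{-}\lim z$, which is nonzero for general $z\in f$, so $A\notin(f,c_{0}(\widehat{F}))$. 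Trust your computation: the conclusion should require $\alpha=0$ in (\ref{eq25}), exactly parallel to part (e) of the corollary characterizing $(c(\widehat{F}),c_{0})$, where $\alpha=0$ is imposed explicitly.
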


\end{document}